\pgfplotsset{compat=1.14}
\def\@tocline#1#2#3#4#5#6#7{\relax
  \ifnum #1>\c@tocdepth % then omit
  \else
    \par \addpenalty\@secpenalty\addvspace{#2}%
    \begingroup \hyphenpenalty\@M
    \@ifempty{#4}{%
      \@tempdima\csname r@tocindent\number#1\endcsname\relax
    }{%
      \@tempdima#4\relax
    }%
    \parindent\z@ \leftskip#3\relax \advance\leftskip\@tempdima\relax
    \rightskip\@pnumwidth plus1em \parfillskip-\@pnumwidth
    #5\leavevmode\hskip-\@tempdima #6\relax
    \dotfill\hbox to\@pnumwidth{\@tocpagenum{#7}}\par
    \nobreak
    \endgroup
  \fi}
\let\oldtocsection=\tocsection
\let\oldtocsubsection=\tocsubsection
\renewcommand{\tocsection}[2]{\hspace{0em}\oldtocsection{#1}{#2}}
\renewcommand{\tocsubsection}[2]{\hspace{1.25em}\oldtocsubsection{#1}{#2}}
\newcommand{\sheafhom}{\mathscr{H}\text{\kern -3pt {\calligra\large om}}\,}
\newcommand{\sheafext}{\mathscr{E}\text{\kern -3pt {\calligra\large xt}}\,}
\newcommand{\Z}{\mathbb{Z}}
\newcommand{\bcup}{\bigcup}
\newcommand{\bcap}{\bigcap}
\newcommand{\R}{\mathbb{R}}
\newtheorem{theo}{Theorem}[section]
\newtheorem{theorem}[theo]{Theorem}
\newtheorem*{theorem*}{Theorem}
\newtheorem{thm}[theo]{Theorem}
\newtheorem*{thm*}{Theorem}
\newtheorem{proposition}[theo]{Proposition}
\newtheorem*{proposition*}{Proposition}
\newtheorem{prop}[theo]{Proposition}
\newtheorem*{prop*}{Proposition}
\newtheorem{remark}[theo]{Remark}
\newtheorem*{remark*}{Remark}
\newtheorem{lemma}[theo]{Lemma}
\newtheorem*{lemma*}{Lemma}
\newtheorem*{cor*}{Corollary}
\newtheorem*{claim*}{Claim}
\newtheorem*{details*}{Details}
\newtheorem*{recall*}{Recall}
\newtheorem*{ass*}{Assumption}
\newtheorem*{conj*}{Conjecture}
\newtheorem*{intprob*}{The Interpolation Problem}
\theoremstyle{definition}
\newtheorem*{definition*}{Definition}
\newtheorem*{deff*}{Definition}
\newtheorem{exmp}{Example}[section]
\newtheorem*{problem*}{Problem}
\newtheorem*{prob*}{Problem}
\begin{document}
\title{The sum of the Betti numbers of smooth Hilbert schemes}
\author[Donato]{Joseph Donato}
\email{jsdonato@umich.edu}
%\author[Jiang]{Zhan Jiang}
%\email{zoeng@umich.edu}
\author[Lewis]{Monica Lewis}
\email{malewi@umich.edu}
\author[Ryan]{Tim Ryan}
\email{rtimothy@umich.edu}
\author[Udrenas]{Faustas Udrenas}
\email{fudrenas@umich.edu}
\author[Zhang]{Zijian Zhang}
\email{zzjharry@umich.edu}

\address{Department of Mathematics, University of Michigan, Ann Arbor, MI}

\begin{abstract}
In this paper, we compute the sum of the Betti numbers for 6 of the 7 families of smooth Hilbert schemes over projective space.
\end{abstract}
\maketitle
%\clearpage
\setcounter{tocdepth}{2}
%\tableofcontents
%\clearpage
% ----------------------------------------------------------------
% ----------------------------------------------------------------
% ----------------------------------------------------------------

\section{Introduction}
\noindent 
Hilbert schemes are one of the classic families of varieties. 
In particular, Hilbert schemes of points on surfaces have been extensively studied; so extensively studied that any reasonable list of example literature would take several pages, see \cite{N,N2,G} for introductions to the area.
This study was at least in part due to these being one of the only sets of Hilbert schemes which were known to be smooth.
Recent work \cite{SS} has characterized exactly which Hilbert schemes on projective spaces are smooth by giving seven ``families'' of smooth Hilbert schemes.

\medskip \noindent
One of the most fundamental topological properties of an algebraic variety is its homology.
The homology of Hilbert schemes of points has been extensively studied, e.g. \cite{ES,G2,Gr,LS,LQW,E}.
It is an immediate consequence of \cite{BB} that the smooth Hilbert schemes over $\mathbb{C}$ have freely generated even homology groups and zero odd homology groups.
This was used to compute the Betti numbers of Hilbert schemes of points on the plane in \cite{ES}.
A natural follow up question then is what are the ranks of the homology groups for all of the smooth Hilbert schemes?
In this paper, we compute the sum of the Betti numbers for six of the seven families of smooth Hilbert schemes.
Since these Hilbert schemes are smooth, this is equivalent to computing the dimension of the cohomology ring as a vector space over $\mathbb{C}$.
Note in this case the cohomology and the Chow rings are isomorphic.

\medskip \noindent
In order to state the theorem, recall that Macaulay proved that the Hilbert scheme of subschemes of $\mathbb{P}^n$ with Hilbert polynomial $p$, denoted $\mathbb{P}^{n[p]}$, is nonempty if and only if $p$ can be written in the form $p(t) = \sum_{i=1}^r \binom{t+\lambda_i -i}{\lambda_i-1}$ for some integer partition $\lambda = (\lambda_1,\cdots,\lambda_r)$ of integers satisfying $\lambda_1\geq \cdots \geq \lambda_r \geq 1$.
\begin{theorem}
\label{thm: main}
Let $H_{n,\lambda}$ be the sum of the Betti numbers for $\mathbb{P}^{n[p_\lambda]}$ where $p_\lambda$ corresponds to the integer partition $\lambda = (\lambda_1,\dots,\lambda_r)$. \\
(1.1) If $n=1$ and $\lambda=(1^r)$, then $H_{n,\lambda} = r+1$.\\
(1.2) If $n=2$ and  $\lambda=(2^m,1^r)$\footnote{Note, we write $a^b$ in place of repeating $a$ in the partition $b$ times for convenience}, then $H_{n,\lambda} = \binom{m+2}{2}\cdot \sum_{c_1+c_2+c_3=r}\Big[f_1(c_1)\cdot f_1(c_2)\cdot f_1(c_3)\Big]$ where $f_1$ is the integer partition function.\\
(3) If $\lambda = (1)$ or $\lambda = (n^{r-2},\lambda_{r-1},1)$ where $r\geq 2$ and $n\geq \lambda_{r-1} \geq 1$, then $H_{n,\lambda} = \binom{n+r-2}{r-2}\binom{n+1}{\lambda_{r-1}}(n+1)$.\\
(4) If $\lambda = (n^{r-s-3},\lambda_{r-s-2}^{s+2},1)$ where $r-3\geq s \geq 0$ and $n-1\geq \lambda_{r-s-2}\geq 3$, then \par $H_{n,\lambda} = \binom{n+r-s-3}{r-s-3}\left(\binom{n+1}{\lambda_{r-s-2}+1}\left(\binom{\lambda_{r-s-2}+1+s+2}{s+2}-(\lambda_{r-s-2}+1)\right)+\binom{n+1}{\lambda_{r-s-2}}\right)(n+1)$.\\
(5) If $\lambda = (n^{r-s-5},2^{s+4},1)$ where $r-5 \geq s \geq 0$, then \par $H_{n,\lambda} = \binom{n+r-s-5}{r-s-5}\left(\binom{n+1}{3}\left(\binom{3+s+4}{s+4}-3\right)+\binom{n+1}{2}\right)(n+1)$. \\
(6) If $\lambda = (n^{r-3},1^3)$ where $r\geq 3$, then $H_{n,\lambda} = \binom{k+n}{n}\frac{n(n+1)(5n+1)}{3}$. \\
(7) If $\lambda = (m+1)$ or $r=0$, then $H_{n,\lambda} = 1$. \\
\end{theorem}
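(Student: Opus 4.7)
The strategy is torus localization: reduce $H_{n,\lambda}$ to a count of monomial ideals, then carry out that count family by family. The standard action of the maximal torus $T \subset \mathrm{PGL}_{n+1}$ on $\mathbb{P}^n$ induces an action on each $\mathbb{P}^{n[p_\lambda]}$ whose fixed points are exactly the saturated monomial ideals $I \subset \mathbb{C}[x_0,\ldots,x_n]$ with Hilbert polynomial $p_\lambda$. Because the Hilbert schemes in question are smooth and projective, the theorem of \cite{BB} gives a cellular decomposition into affine cells indexed by the $T$-fixed points; combined with the vanishing of odd homology (also from \cite{BB}), the sum of the Betti numbers equals the Euler characteristic, which in turn equals the number of fixed points:
\[
H_{n,\lambda} = \#\{\text{saturated monomial ideals } I \subset \mathbb{C}[x_0,\ldots,x_n] \text{ with Hilbert polynomial } p_\lambda\}.
\]
The theorem then reduces to a purely combinatorial enumeration.

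\medskip

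\noindent The second step is the case analysis. Cases (1.1) and (7) are essentially immediate: in (1.1) a length-$r$ monomial ideal in $\mathbb{C}[x_0,x_1]$ corresponds to distributing the $r$ points between the two fixed points of $\mathbb{P}^1$, yielding $r+1$ configurations, while in (7) the Hilbert scheme is a single point. For the remaining families I would combine Macaulay's expansion with the explicit description of the smooth loci from \cite{SS} to pin down which monomials can appear as minimal generators of $I$. The formulas in (3)--(6) suggest that each count factors cleanly as a product of (a) a binomial coefficient enumerating a choice of ``universal'' coordinate subspace along which the ideal is generic (the outer $\binom{n+r-\ast}{\ast}$), (b) a local count of monomials near a distinguished torus-fixed point, and (c) an overall factor of $(n+1)$ coming from the choice of that distinguished fixed point. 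The triple convolution in (1.2) should arise from distributing the $r$ height-$1$ columns of the partition among the three $T$-fixed points of $\mathbb{P}^2$, each local contribution being an ordinary integer partition enumerated by $f_1$.

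\medskip

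\noindent I expect the main obstacle to be cases (4) and (5), whose formulas contain inner correction terms of the form $\binom{\lambda_{r-s-2}+1+s+2}{s+2} - (\lambda_{r-s-2}+1)$. Such terms strongly suggest an inclusion--exclusion that subtracts monomial configurations violating either saturation or the smoothness criterion of \cite{SS}; identifying precisely which configurations must be removed, and verifying that the residual count matches the stated formula, is the delicate combinatorial step. Case (6) has a simpler cubic-in-$n$ expression, but still requires a bijective interpretation to match the enumeration of monomial ideals of shape $(n^{r-3}, 1^3)$ against the product $\binom{k+n}{n} \cdot n(n+1)(5n+1)/3$.
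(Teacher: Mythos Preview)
Your reduction step is exactly the paper's: Białynicki--Birula plus Fulton's cellular homology identifies $H_{n,\lambda}$ with the number of saturated monomial ideals of the given Hilbert polynomial, and this is stated in Section~\ref{sec: background}. So the architecture is right.

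The gap is in the combinatorial half. The paper does not organise the count around ``choice of a distinguished fixed point'' or ``configurations violating saturation or the smoothness criterion''; instead it proves a structural lemma: the set of monomials \emph{not} in a saturated monomial ideal is a union of $k$-orthants $P^{c_1,\ldots,c_{n+1-k}}_{p_1,\ldots,p_{n+1-k}}$ in the lattice $\mathbb{Z}_{\geq 0}^{n+1}$, and the multiplicities in $\lambda$ dictate exactly how many orthants of each dimension must occur (Lemma~\ref{lem: northants} and its analogues). The outer binomial $\binom{n+r-\ast}{\ast}$ is the number of ways to place the $n$-orthants (a weak composition), and the factor $(n+1)$ is the number of ways to place the single final ray, not a choice of fixed point.

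Your reading of the correction terms in (4) and (5) is the concrete error. The subtraction $-(\lambda_{r-s-2}+1)$ has nothing to do with saturation or with the Skjelnes--Smith conditions. The paper shows (Propositions~\ref{prop: 5} and~\ref{prop: 4}) that the $s+2$ (resp.\ $s+4$) middle-dimensional orthants are forced, by the Hilbert polynomial alone, to all lie inside a single $(\lambda_{r-s-2}+1)$-orthant; configurations that do not satisfy this give the wrong Hilbert polynomial outright. One then counts, for each ambient $(\lambda_{r-s-2}+1)$-orthant, the placements that are \emph{not all parallel}, and adds back the all-parallel placements separately (since those sit in several ambient orthants and would otherwise be overcounted). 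That bookkeeping is what produces $\binom{n+1}{\lambda_{r-s-2}+1}\bigl(\binom{\lambda_{r-s-2}+s+3}{s+2}-(\lambda_{r-s-2}+1)\bigr)+\binom{n+1}{\lambda_{r-s-2}}$. Without the orthant picture and this ``must lie in a common $(\lambda+1)$-orthant'' constraint, your inclusion--exclusion will not land on the stated formulas.
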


\vspace{-.18in}
\noindent Note, the numbering is nonstandard so as to match the numbering in the families of smooth Hilbert schemes in \cite{SS}.

\medskip\noindent
The proof of this results works by translating computing the ranks of the homology groups into counting saturated monomial ideals and then translating that into counting choices of orthants in an $(n+1)$-dimensional lattice.
The cases are proved in proven in Propositions \ref{prop: line}, \ref{prop: 1}, \ref{prop: 6}, \ref{prop: 3}, \ref{prop: 5}, and \ref{prop: 4}.

\medskip \noindent
The organization of the paper is as follows.
In Section \ref{sec: background}, the necessary background is given.
In Section \ref{sec: line}, the case of Hilbert schemes over the projective line is worked out as an example case.
In Section \ref{sec: plane}, the case of Hilbert schemes over the projective plane is worked out, which proves case (1.1) of Theorem \ref{thm: main}.
Finally, in Section \ref{sec: general}, we prove cases (3)-(7) of Theorem \ref{thm: main}.

\medskip \noindent
The authors would like to thank Harry Bray, who directed the Laboratory of Geometry at Michigan where this work was carried out, and Zhan Jiang, for many enlightening conversations.

\section{Background}
\label{sec: background}
\noindent
In this section, we review the necessary background material.

\medskip\noindent
Let $I \subset \mathbb{C}[x_0,\dots,x_n]$ be a homogeneous ideal.
As the quotient ring $R/I$ is a graded ring, it comes equipped with a Hilbert function, $h_I: \mathbb{Z}_{\geq 0} \to \mathbb{Z}_{\geq 0}$ which sends $d$ to the dimension of the degree $d$ graded piece.
By Hilbert \cite{H}, this function agrees with a polynomial $H_I$ for $d>>0$.
This is the \textit{Hilbert polynomial} of $I$; note this is more properly the Hilbert polynomial of $R/I$, but no confusion will arise by this usage.

\medskip\noindent
The degree of this polynomial is the dimension of $V(I)$, and the other coefficients include other geometric information such as the degree.
As the polynomial captures a lot of the geometry of the subvariety cut out by $I$, a natural definition of equivalence on algebraic subvarieties of $\mathbb{P}^n$ is those with the same Hilbert polynomial.
By Grothendieck \cite{Gro}, the set of subvarieties of $\mathbb{P}^n$ with the same Hilbert polynomial $p$ forms an algebraic scheme called the \textit{Hilbert scheme}, denoted $\mathbb{P}^{n[p]}$.
The first question one can ask about these Hilbert schemes is when are they nonempty?
This was answered by Macaualay.

\begin{theorem}\cite{M} 
Given \(R = \mathbb{C}[x_0, \cdots, x_n]\) and polynomial \(p(d)\) in one variable, there exists ideals in R with Hilbert polynomial $p(d)$ if and only if \(p(d)\) can be written in the form \(\Sigma^{m}_{i = 1} \binom{d + \lambda_i - i}{\lambda_i - 1}\) for some integer partition \(n \geqslant \lambda_1 \geqslant \cdots \geqslant \lambda_m \geqslant 1\).
\end{theorem}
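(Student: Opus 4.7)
The plan is to prove Macaulay's theorem by reducing the existence question for arbitrary homogeneous ideals to a purely combinatorial question about saturated monomial ideals, and then to enumerate such ideals by integer partitions. I would first invoke the standard Gr\"obner basis result that every homogeneous ideal $I \subset R$ has the same Hilbert function as its initial ideal with respect to any monomial order, and hence the same Hilbert polynomial. Since the Hilbert polynomial depends only on the saturation of $I$ with respect to the irrelevant ideal $(x_0,\ldots,x_n)$, it suffices to show that the Hilbert polynomials of saturated monomial ideals are exactly those of the stated form.

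For the ``if'' direction, given a partition $n \geq \lambda_1 \geq \cdots \geq \lambda_m \geq 1$, I would construct an explicit saturated monomial ideal realizing the claimed polynomial. A natural candidate is the ideal whose vanishing locus is a suitable thickening of a chain of coordinate linear subspaces $L_i \cong \mathbb{P}^{\lambda_i - 1}$, where each $L_{i+1}$ is nested inside $L_i$. Using the short exact sequences associated to the inclusions and computing contributions stratum by stratum, the $i$-th stratum contributes exactly $\binom{d+\lambda_i - i}{\lambda_i - 1}$; the shift by $-i$ in the binomial coefficient reflects that the $i$-th component is cut out, beyond the ambient $L_{i-1}$, by $i$ linear equations.

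For the ``only if'' direction, I would analyze an arbitrary saturated monomial ideal $I$ by decomposing it according to its top-dimensional associated primes, which are generated by subsets of the variables. Inductively peeling off coordinate subspaces of decreasing dimension and applying the additivity of Hilbert polynomials along the resulting Stanley-type decomposition produces a canonical expression of $p(d)$ in the basis $\{\binom{d+k}{k}\}_{k \geq 0}$; the bound $\lambda_1 \leq n$ comes from the ambient $\mathbb{P}^n$, while the weakly decreasing condition $\lambda_i \geq \lambda_{i+1}$ is forced by saturation, which prevents any stratum from being embedded inside a smaller one.

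The main obstacle is the combinatorial bookkeeping on both sides: ensuring that the construction from a partition yields the \emph{exact} polynomial $\sum_{i=1}^m \binom{d+\lambda_i - i}{\lambda_i-1}$ (not merely agreeing to leading order), and conversely that the extraction procedure assigns to each saturated monomial ideal a well-defined partition. The cleanest way to handle the latter is to establish uniqueness of the so-called Macaulay representation of an integer-valued polynomial in the shifted binomial basis indexed by partitions; once this uniqueness is in hand, the correspondence between partitions and admissible Hilbert polynomials becomes a bijection, and the theorem follows.
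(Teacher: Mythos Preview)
The paper does not prove this theorem; it is stated as a classical result with a citation to Macaulay and is used purely as background. There is therefore no proof in the paper to compare your proposal against.

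As a standalone sketch, your outline is broadly reasonable and follows a standard modern route: reduce to monomial ideals via initial ideals, then to saturated monomial ideals, then match these with partitions. A couple of cautions. First, the reduction step is slightly delicate: passing to the initial ideal preserves the Hilbert function, but the initial ideal of a saturated ideal need not be saturated, so you should either work with Hilbert polynomials throughout or explicitly saturate after taking the initial ideal. Second, your ``only if'' direction is where the real content lies, and the phrase ``inductively peeling off coordinate subspaces'' hides the nontrivial combinatorics; the usual clean argument does not go through associated primes directly but rather through Macaulay's characterization of admissible Hilbert functions (via Macaulay representations of integers and the operator $a \mapsto a^{\langle d \rangle}$), from which the partition form of the Hilbert polynomial is then extracted. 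Your final paragraph gestures at this uniqueness statement, but as written the decomposition you describe does not obviously terminate in the claimed form, and the assertion that saturation forces $\lambda_i \geq \lambda_{i+1}$ is not justified. If you want a self-contained proof, the safest path is to invoke or reprove the Macaulay bound on Hilbert functions and then translate to the polynomial level.
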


\medskip\noindent 
This theorem means any $\lambda$-sequence defines a Hilbert polynomial, and we can also get a $\lambda$-sequence from any Hilbert polynomial written in the form shown in Theorem 3.2. For example, if $\lambda = (3, 2)$, then the corresponding Hilbert polynomial is \(\binom{d + 3 - 1}{3 - 1} + \binom{d + 2 - 2}{2 - 1} = \binom{d + 2}{2} + \binom{d}{1} = \frac{(d + 2)(d + 1)}{2!} + d = \frac{1}{2}d^2 + \frac{5}{2}d + 1\).
We will abuse notation and refer interchangeably to $\lambda$ and $p_\lambda$.

\medskip\noindent
It is a natural question to ask about the homology of a Hilbert scheme, but this question is most interesting for the smooth Hilbert schemes where Poincare duality holds, which makes the homology dual to the cohomology.
That naturally leads one to ask which Hilbert schemes are smooth.
This was recently answered by Skjelnes and Smith \cite{SS}, Note, the $\lambda's$ in the following Theorem  are exactly the same as the $\lambda's$ mentioned in previous theorem.
\begin{theorem}\cite{SS}
\text Let p be a polynomial in a single variable with some sequence \(\lambda = (\lambda_1,..., \lambda_r)\) with \(n \geqslant \lambda_1 \geqslant ... \geqslant \lambda_r \geqslant 1\). Then the Hilbert scheme \(Hilb^p(\mathbb{P}^{n})\) on projective space is smooth if and only if:
\par 1. \(n \leqslant 2\), 
\par 2. \(\lambda_r \geqslant 2\), 
\par 3. \(r \leqslant 1\)\,or\,\(\lambda = (n^{r-2}, \lambda^{1}_{r-1}.1^{1})\)\,for all\,\,\(r \geqslant 2\), 
\par 4. \(\lambda = (n^{r - s - 3}, \lambda^{s + 2}_{r - s - 2}, 2^0, 1^1) \) for all \(r \geqslant 2\), 
\par 5. \(\lambda = (n^{r - s - 5}, 2^{s + 4}, 1^1)\) for all \(0 \leqslant s \leqslant r - 5\) and all \(r \geqslant 5\),
\par 6. \(\lambda = (n^{r - 3}, 1^3)\) for all \(r \geqslant 3\)\text{, or }
\par 7. \(r = 0 \text{ or }(n+1)\)
\end{theorem}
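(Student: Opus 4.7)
My plan is to reduce smoothness of $\tn{Hilb}^p(\PP^n)$ to a tangent-space computation at the lex-segment monomial ideal $I_\lambda$, and then to case-analyze the shape of $\lambda$.

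First, I would use the $(\CC^*)^{n+1}$-action on $\PP^n$, which lifts to an action on $\tn{Hilb}^p(\PP^n)$ whose fixed locus consists of the saturated monomial ideals with Hilbert polynomial $p$. Since the Hilbert scheme is projective, every irreducible component meets this fixed locus. Among the fixed points, the lex-segment ideal $I_\lambda$ is distinguished: by a theorem of Reeves-Stillman, it lies on an irreducible component of the expected Gotzmann dimension $D(p)$. Combining this with Hartshorne's connectedness theorem and the fact that a smooth connected scheme is irreducible, $\tn{Hilb}^p(\PP^n)$ is smooth if and only if (a) the Hilbert scheme is irreducible, equal to the Reeves-Stillman component, and (b) $\dim T_{I_\lambda}\tn{Hilb}^p(\PP^n) = D(p)$. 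So it suffices to check these two conditions.

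Second, I would compute $T_{I_\lambda}\tn{Hilb}^p(\PP^n) \cong \Hom_R(I_\lambda,R/I_\lambda)_0$, where $R = \CC[x_0,\dots,x_n]$, using the Eliahou-Kervaire minimal free resolution of the lex-segment ideal. Both the generators and minimal syzygies of a lex segment are entirely explicit combinatorial data, so this Hom module splits as a direct sum over pairs (minimal generator $m$, variable $x_i \nmid m$). Each summand contributes an easily evaluated combinatorial term depending only on $\lambda$, yielding a closed-form expression for $\dim T_{I_\lambda}$ to compare against the Gotzmann formula for $D(p)$.

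Third -- the main obstacle -- I would perform the case analysis. For each of the seven candidate families, verify directly that the explicit tangent count equals $D(p)$, and rule out any additional irreducible components by classifying the Borel-fixed ideals of the given Hilbert polynomial (these are the candidate generic points of the components of $\tn{Hilb}^p(\PP^n)$). Conversely, for any $\lambda$ outside the list, exhibit a specific Eliahou-Kervaire generator in intermediate degree whose tangent contribution is strictly positive, forcing $\dim T_{I_\lambda} > D(p)$. The rigid shape restrictions in cases (3)-(6) (staircases of at most two or three distinct row heights $n$, some intermediate value, and $1$) are forced precisely by demanding that every such excess contribution vanish. The boundary cases are handled separately: $n \leq 2$ follows from Fogarty's theorem that the Hilbert scheme of zero-dimensional subschemes of a smooth surface is smooth, while $r \leq 1$ gives a Hilbert scheme that is either a point or a projective space (the linear system of a hypersurface).
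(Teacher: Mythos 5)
A preliminary point: the paper does not prove this theorem at all. It is imported wholesale from Skjelnes--Smith \cite{SS} in the background section and used as a black box (the paper's actual contribution begins downstream, counting saturated monomial ideals on each of the seven families), so there is no internal proof to compare your proposal against. Judged on its own merits, your outline has a genuine gap, and it sits at the central reduction step rather than in the combinatorial details.

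You reduce smoothness of $\textnormal{Hilb}^p(\mathbb{P}^n)$ to the two conditions (a) irreducibility, with the scheme equal to the Reeves--Stillman component, and (b) $\dim T_{I_\lambda}\textnormal{Hilb}^p(\mathbb{P}^n)=D(p)$ at the lexicographic point. Neither half of this criterion can do the work you assign it. The full Reeves--Stillman theorem states that the lexicographic point is always a \emph{smooth} point of the Hilbert scheme, so condition (b) holds for every $p$ and every $n$: the Eliahou--Kervaire tangent computation you propose will always return $D(p)$, and the converse direction of your argument (``exhibit a generator forcing $\dim T_{I_\lambda}>D(p)$'') can never succeed. Meanwhile condition (a) alone is not equivalent to smoothness: for $\lambda=(1^4)$ and $n=3$ (correctly excluded by the statement), $\textnormal{Hilb}^4(\mathbb{P}^3)$ is irreducible of dimension $12$ but singular at the point $[V(\mathfrak{m}^2)]$, where the tangent space has dimension $18$. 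What actually has to be controlled --- and what makes the Skjelnes--Smith/Staal analysis hard --- is the tangent space at \emph{every} point; since the singular locus is closed and invariant under the Borel subgroup, this reduces to a tangent-space computation at every Borel-fixed ideal with Hilbert polynomial $p$, not just the lexicographic one. Your machinery (explicit $\textnormal{Hom}_R(I,R/I)_0$ via the combinatorics of monomial ideals) is the right kind of tool for that larger computation, but as organized the proposal establishes neither direction of the theorem.
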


\medskip\noindent
%In any of these seven `nice' cases, one way to study the Hilbert Scheme is to study the monomial ideals with its Hilbert polynomial. And, in our project, we will only focus on these `nice' cases.
The Hilbert schemes over $\mathbb{P}^n$ inherit the $\mathrm{PGL}(n+1)$ action from projective space itself,
In particular, this restricts to a $\mathbb{C}^*$-action; actually it restrict to many different $\mathbb{C}^*$-actions, any of which will work. %which takes $[x_0:\dots:x_n]$ to $[tx_0:\dots:t^n x_n]$.
The fixed points of this action are points corresponding to the finitely many saturated monomial ideals with that Hilbert polynomial.
That let's us apply the following theorem of Bialynicki-Birula for smooth Hilbert schemes.
\begin{thm}\cite{BB}
Let $X$ be a smooth projective varity with an action of $\mathbb{C}^*$. Suppose that the fixpoint set of $\{p_1,\cdots,p_m\}$ is finite and let $X_i = \{x\in X : \lim_{t\to 0} tx = p_i\}$. Then $X$ has a cellular decomposition with cells $X_i$.
\end{thm}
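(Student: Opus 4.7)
The plan is to prove the theorem by first analyzing the $\mathbb{C}^*$-action locally at each fixed point via linearization on tangent spaces, then patching the local pictures together using properness of $X$. The key idea is that, near a fixed point, the action looks like a linear action on the tangent space, and the cell $X_i$ of points flowing to $p_i$ corresponds to the positive weight subspace.

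First I would analyze the action at a single fixed point $p_i$. Because $\mathbb{C}^*$ acts on $X$ and fixes $p_i$, it acts linearly on $T_{p_i}X$, which decomposes into weight spaces $T_{p_i}X = \bigoplus_{k\in\mathbb{Z}} T_{p_i}X(k)$. Since $p_i$ is an isolated fixed point, there is no zero weight, so I can split $T_{p_i}X = T^+ \oplus T^-$ by sign of the weight. Next I would invoke Sumihiro's theorem to obtain a $\mathbb{C}^*$-stable affine open neighborhood $U_i$ of $p_i$, and then an equivariant version of the formal/\'etale linearization (via the equivariant Luna slice theorem, which applies since $p_i$ is smooth and the isotropy is all of $\mathbb{C}^*$) to produce a $\mathbb{C}^*$-equivariant \'etale map from a neighborhood of $p_i$ in $X$ to a neighborhood of $0$ in $T_{p_i}X$. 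In these coordinates, a point flows to $p_i$ as $t\to 0$ if and only if its image lies in $T^+$, so $X_i$ is locally the affine space $T^+$ of dimension $d_i:=\dim T^+$.

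Next I would globalize. Because $X$ is projective and hence proper, every orbit map $\mathbb{C}^*\to X$, $t\mapsto tx$, extends uniquely to a morphism $\mathbb{P}^1\to X$, so $\lim_{t\to 0}tx$ exists and must be one of the $p_i$. Therefore $X=\bigsqcup_i X_i$ set-theoretically. To give each $X_i$ the structure of a locally closed subvariety I would equivariantly embed $X\hookrightarrow\mathbb{P}^N$ with a linearized action, so that $X_i$ is cut out inside $X$ by the algebraic attractor of the corresponding fixed point in $\mathbb{P}^N$. To produce a cell decomposition, I would order the fixed points by, say, the value of a moment map or equivalently by the dimension of the attractor, and check that $\overline{X_i}\subseteq X_i\cup\bigcup_{j>i}X_j$; this yields a filtration of $X$ by closed subvarieties whose successive complements are the $X_i$.

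The main obstacle is upgrading the local description of $X_i$ near $p_i$ (as an analytic/\'etale neighborhood of $T^+$) to a global statement that $X_i$ is isomorphic to the affine space $\mathbb{A}^{d_i}$. The trick is to use the $\mathbb{C}^*$-action itself: the retraction $x\mapsto\lim_{t\to 0}tx$, extended from $X_i$ to a morphism $X_i\to\{p_i\}$, exhibits $X_i$ as a $\mathbb{C}^*$-equivariant affine bundle over a point once one identifies it with the total space of $T^+$. Concretely, I would use the linearization of the action on an ample line bundle to build global sections whose weights match those of $T^+$, producing an equivariant isomorphism $X_i\xrightarrow{\sim}T^+$. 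Once each cell is identified as an affine space, the filtration constructed above is the sought-after cellular decomposition.
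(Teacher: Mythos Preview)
The paper does not prove this theorem at all: it is quoted verbatim as a result of Bia{\l}ynicki-Birula and simply cited as \cite{BB}, with no argument given. So there is no ``paper's own proof'' to compare your proposal against.

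That said, your outline is a reasonable sketch of the standard proof of the Bia{\l}ynicki-Birula decomposition. Sumihiro's theorem to get equivariant affine charts, linearization at each isolated fixed point to identify the local attractor with the positive-weight subspace of the tangent space, and properness to guarantee that every point has a limit are exactly the right ingredients. The one place where your write-up is genuinely soft is the passage from the local \'etale description of $X_i$ near $p_i$ to the global claim that $X_i$ is isomorphic to an affine space: invoking ``an equivariant isomorphism $X_i\xrightarrow{\sim}T^+$'' built from sections of a linearized ample bundle is more of a slogan than an argument, and in Bia{\l}ynicki-Birula's original paper this step is handled carefully by analyzing the graded coordinate ring of the equivariant affine neighborhood. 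If you want a self-contained proof you should fill that in; but for the purposes of this paper the theorem is treated as a black box, and your sketch already goes well beyond what the authors supply.
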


\medskip\noindent
Pairing that with the following result of Fulton shows that a smooth Hilbert scheme has freely generated even cohomology groups and no odd cohomology groups.

\begin{thm}\cite{F}
Let $X$ be a scheme with a cellular decomposition. Then for $0\leq i \leq \dim(X)$, 
\par (i) $H_{2i+1}(X) = 0$ 
\par (2) $H_{2i}(X)$ is a $\mathbb{Z}$-module freely generated by the classes of the closures of the $i$-dimensional cells.
\par (iii) The cycle map $\mathrm{cl}: A_*(X) \to H_*(X)$ is an isomorphism.
\end{thm}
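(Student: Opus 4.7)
The plan is to exploit the cellular filtration to reduce the computation to that of disjoint unions of affine spaces, where both ordinary homology and the Chow group are transparent. Unwinding the cellular decomposition, I would obtain a chain of closed subschemes $\emptyset = X_{-1} \subset X_0 \subset X_1 \subset \cdots \subset X_n = X$ with each $X_i \setminus X_{i-1}$ a disjoint union of affine spaces $\mathbb{A}^i$, and then induct on $i$.

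For parts (i) and (ii), I would apply the long exact sequence in singular homology with $\mathbb{Z}$ coefficients for the pair $(X_i, X_{i-1})$. By excision and the standard computation $H_k(\mathbb{A}^i, \mathbb{A}^i \setminus \{0\}) \cong \tilde H_k(S^{2i})$, the relative group $H_k(X_i, X_{i-1})$ is a free abelian group of rank equal to the number of $i$-cells when $k = 2i$ and vanishes otherwise. The inductive hypothesis forces $H_{\text{odd}}(X_{i-1}) = 0$, so every connecting map vanishes and the long exact sequence breaks into short exact sequences
\[
0 \to H_{2j}(X_{i-1}) \to H_{2j}(X_i) \to H_{2j}(X_i, X_{i-1}) \to 0.
\]
Freeness of the right-hand term makes each sequence split, so $H_{2j}(X_i)$ remains free and $H_{2j+1}(X_i)$ stays zero. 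When $j = i$ there is no contribution from $X_{i-1}$ on dimensional grounds, and the new basis elements are precisely the fundamental classes of the closures of the $i$-cells.

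For (iii), I would run the parallel induction with the right-exact localization sequence $A_k(X_{i-1}) \to A_k(X_i) \to A_k(X_i \setminus X_{i-1}) \to 0$ from Fulton's intersection theory. Since $A_k(\mathbb{A}^i) = 0$ for $k < i$ and $A_i(\mathbb{A}^i) = \mathbb{Z}$ generated by the fundamental class, the same induction shows $A_*(X)$ is free on the cell closures. The cycle map sends the class of a closed subvariety to its fundamental class in homology, so it sends the cell-closure basis of $A_*(X)$ bijectively to the cell-closure basis of $H_{2*}(X)$, proving $\mathrm{cl}$ is an isomorphism.

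The main obstacle I anticipate is the careful bookkeeping at the excision step: one must verify that the local model $\mathbb{A}^i \hookrightarrow X_i$ induces the claimed isomorphism on the relevant relative homology, and that under this identification the generator of $H_{2i}(\mathbb{A}^i, \mathbb{A}^i \setminus \{0\})$ matches the fundamental class of the cell closure pushed forward to $X$ (so that the $A_*$-basis and $H_{2*}$-basis really are the same under $\mathrm{cl}$). Once this local calculation is in place, the rest of the argument is a standard inductive diagram chase.
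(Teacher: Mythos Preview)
The paper does not prove this theorem; it is quoted without proof as a background result from Fulton's \emph{Intersection Theory} (see Example~1.9.1 and Chapter~19 there). Your proposal is correct and is essentially the standard argument Fulton gives: induct along the cellular filtration, use excision and the long exact sequence of the pair to compute $H_*$, run the parallel localization sequence for $A_*$, and match the two bases of cell closures under the cycle map. There is nothing to compare against in this paper itself.
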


\medskip\noindent
Thus, to count the sum of the Betti numbers of a smooth Hilbert scheme it suffices to count the number of saturated monomial ideals with that Hilbert polynomial.

\section{The projective line}
\label{sec: line}
\noindent
We first consider the case where $n=1$; equivalently, this is the case where the polynomial ring is $R = \mathbb{C}[x_0,x_1]$. In this case, the only possible partitions are $\lambda = (1^m)$ or $(2)$ which are equivalent to the constant Hilbert polynomial $m$ or $t+1$. 
It is well known that $\mathbb{P}^{1[m]} =\mathbb{P}^m$ and $\mathbb{P}^{1[t+1]}$ is a reduced point so Theorem \ref{thm: main} is immediate in these cases.
However, for completeness and clarity, we will give a more basic argument in the case of $\mathbb{P}^{1[m]} =\mathbb{P}^m$ which will illuminate the argument which is somewhat obscured by indexing in the later sections.
%The outline of the proof is as follows.
%Since any saturated monomial ideal in $R$ is principal, we recall that the Hilbert polynomial of a monomial is its degree.
%Finally, we count the number of monomials of degree $m$.

\subsection{Translation for the two variable case}
Consider the case of two variables.
Monomials in the variables $x_0$ and $x_1$ are of the form $x_0^a x_1^b$ where $a$, $b \in \mathbb{Z}_{\geq 0}$.
Monomials in two variables are equivalent to points in the lattice $\mathbb{Z}_{\geq 0}^2$ by pairing the point $(a,b)$ with the monomial $x_0^ax_1^b$.
By a \textit{ray} in this lattice we will mean the points corresponding to the monomials in a set of the form 
\[P_{a}^{i_0}=\{x_{i_0}^{a}x_{i_1}^{b}|b\in\Z_{\geq 0}\}\text{ for some fixed }a\in\Z_{\geq0}.\]

\medskip \noindent
We want to see how a set of rays corresponds to (a complement of) a monomial ideal.
Since all saturated ideals in two variables are principle, consider the monomial ideal $I=(x_{0}^{a}x_{1}^{b})$.
Every monomial $\overline{x} = x_0^cx_1^d \not \in I$ either has that  $c<a$ or that $d<b$.
That can be rephrased as \[\overline{x} \not \in I \text{ if and only if } \overline{x} \in \left(\bigcup_{i=0}^{b-1} P_{i}^{1}\right) \bigcup \left(\bigcup_{j=0}^{a-1} P_{j}^{0}\right).\]

\begin{figure}[h]
\centering
\includegraphics[width=100mm]{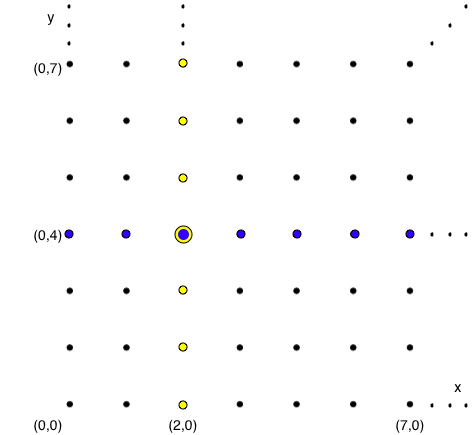}
\caption{A visualization of the sets $P_{4}^{y}$ (in blue) and $P_{2}^{x}$ (in maize) in the $\mathbb{Z}_{\geq0}^{2}$ (x,y) lattice}
  \label{fig:2dlattice}
\end{figure}

Since the Hilbert polynomial of $I = (x_0^ax_1^b)$ is $a+b$, this shows the following lemma.

\begin{lemma}
The number of saturated monomial ideals in two variables with Hilbert polynomial $d$, which corresponds to the partition $(1^d)$, is the same as the number of ways to lay $d$ rays in the lattice $\mathbb{Z}_{\geq 0}^2$ in stacks along the two axes.
\end{lemma}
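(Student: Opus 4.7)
The plan is to identify saturated monomial ideals in $\mathbb{C}[x_0,x_1]$ with Hilbert polynomial $d$ with pairs $(a,b) \in \mathbb{Z}_{\geq 0}^2$ satisfying $a+b = d$, and then invoke the lattice correspondence already established in the excerpt.

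First, I would classify the saturated monomial ideals. Every monomial ideal in two variables admits a primary decomposition whose associated primes lie among $(x_0)$, $(x_1)$, and the irrelevant ideal $\mathfrak{m}=(x_0,x_1)$. Saturating with respect to $\mathfrak{m}$ removes any $\mathfrak{m}$-primary component, leaving only a $(x_0)$-primary part of the form $(x_0^a)$ and a $(x_1)$-primary part of the form $(x_1^b)$. Hence every saturated monomial ideal equals $(x_0^a) \cap (x_1^b) = (x_0^a x_1^b)$ for a unique pair $(a,b) \in \mathbb{Z}_{\geq 0}^2$; in particular, every nonzero saturated monomial ideal in two variables is principal.

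Next, I would compute the Hilbert polynomial of $I=(x_0^a x_1^b)$. The degree-$t$ monomials outside $I$ are exactly those $x_0^c x_1^{t-c}$ with $c < a$ or $t-c < b$. Once $t \geq a+b-1$ these two conditions are mutually exclusive, contributing $a$ and $b$ monomials respectively, so $h_I(t) = a+b$ for all sufficiently large $t$, and the Hilbert polynomial is the constant $a+b$.

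Finally, I would read off the bijection. The excerpt already identifies the complement of $(x_0^a x_1^b)$ in the monomial lattice with $\bigl(\bigcup_{j=0}^{a-1} P_j^0\bigr) \cup \bigl(\bigcup_{i=0}^{b-1} P_i^1\bigr)$, consisting of $a$ rays of type $P^0$ and $b$ rays of type $P^1$ stacked against the two coordinate axes in an L-shape, for a total of $a+b$ rays. Imposing Hilbert polynomial $d$ forces $a+b=d$, so saturated monomial ideals with Hilbert polynomial $d$ correspond bijectively to splittings of $d$ rays into two axial stacks. The only real subtlety is making the classification step handle the boundary cases $a=0$ and $b=0$ uniformly, which the primary decomposition does automatically since $(x_0^a)$ and $(x_1^b)$ remain saturated.
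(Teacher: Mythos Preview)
Your proof is correct and follows essentially the same route as the paper: classify saturated monomial ideals in two variables as $(x_0^a x_1^b)$, compute their Hilbert polynomial as $a+b$, and read off the bijection with ray stackings. The only difference is that the paper simply asserts ``all saturated ideals in two variables are principal'' and that the Hilbert polynomial of $(x_0^a x_1^b)$ is $a+b$, whereas you justify both via primary decomposition and a direct monomial count; your argument is a strictly more detailed version of the paper's.
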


The next proposition uses this lemma to count the number of saturated monomial ideals.

\begin{prop}\label{prop: line}
Given the ring $R = \mathbb{C}[x_0,x_1]$ and the partition $\lambda = (1^m)$, there are exactly $m+1$ saturated monomial ideals of $R$ with Hilbert polynomial $p_\lambda$.
\end{prop}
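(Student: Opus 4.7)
The plan is to invoke the preceding lemma directly and then carry out an elementary counting argument. By the lemma, saturated monomial ideals in $\mathbb{C}[x_0,x_1]$ with Hilbert polynomial $m$ are in bijection with arrangements of $m$ rays stacked along the two coordinate axes of $\mathbb{Z}_{\geq 0}^2$. So our task reduces to counting such arrangements.

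First, I would observe that in order for the rays to form a valid ``stack'' along an axis, a ray $P_j^{0}$ may only be used if $P_0^0, P_1^0, \dots, P_{j-1}^0$ are all used as well (and similarly along the $x_1$-axis); otherwise the union of the rays would not be the complement of a single principal monomial ideal of the form $(x_0^a x_1^b)$. Consequently, an arrangement of $m$ rays is completely determined by a pair $(a,b) \in \mathbb{Z}_{\geq 0}^2$ recording the number of rays stacked on the $x_0$-axis and on the $x_1$-axis respectively, and this pair must satisfy $a + b = m$.

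Next, I would count these pairs. The number of pairs $(a,b) \in \mathbb{Z}_{\geq 0}^2$ with $a + b = m$ is exactly $m+1$, given by $(0,m),(1,m-1),\dots,(m,0)$. Each such pair corresponds uniquely to the saturated monomial ideal $(x_0^a x_1^b)$ whose complement in the monomial lattice is the prescribed stack of rays, and conversely each saturated monomial ideal is principal of this form. This establishes the bijection and yields the count $m+1$.

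There is essentially no obstacle here: the lemma does the geometric translation, and the combinatorics reduces to counting non-negative integer solutions to $a + b = m$. The only subtlety worth writing out carefully is the observation that the rays must be stacked \emph{contiguously} from each axis (i.e.\ one cannot skip a ray), which corresponds to the fact that the complement of a monomial ideal is an order ideal (``staircase'') in $\mathbb{Z}_{\geq 0}^2$.
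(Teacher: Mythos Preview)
Your proposal is correct and follows essentially the same approach as the paper: both invoke the preceding lemma and then count weak compositions of $m$ into two parts, obtaining $\binom{m+1}{1}=m+1$. The paper phrases this as placing $m$ indistinguishable objects into two distinct bins, while you phrase it as counting pairs $(a,b)\in\mathbb{Z}_{\geq 0}^2$ with $a+b=m$; your added remark about contiguity of the stacked rays is a helpful clarification that the paper leaves implicit in the word ``stacks.''
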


\begin{proof}
%First, since $\lambda=(1^{m})$, we can immediately conclude that our Hilbert polynomial is equivalent to the constant $m$.  Furthermore, from our translation above we can also conclude that for large enough degree, the set outside our ideal is of the form $\left(\bcup_{i=0}^{r}P_{i}^{0}\right)\cup\left(\bcup_{j=0}^{s}P_{j}^{1}\right)$ where $(s+1)+(r+1)=m$ which has a direct geometrical translation to $m$ many 1-orthants or rays running along either the x-axis or y-axis in our 2 dimensional lattice.  From here, 
By the previous lemma, the problem of finding the number of saturated monomial ideals boils down to finding the number of ways of placing $m$ rays along either the $x$-axis or $y$-axis.  We can count this by instead considering our $m$ rays to be $m$ indistinct objects and our axes to be two distinct bins in which we are placing the objects.  The number of ways to place these objects in these bins is a weak composition $m$ in 2 bins which is equivalent to $\binom{m+2-1}{2-1}=m+1$.  
\end{proof}

\section{The projective plane}
\label{sec: plane}
\noindent
In this section, we prove case (1) of Theorem \ref{thm: main}, which is the case of Hilbert schemes on the projective plane.
We note that this is known, at least implicitly by \cite{ES}, in the case of Hilbert schemes of points.
To prove this case, we first show the correspondence of saturated monomial ideals in 3 variables and the placement of rays and quadrants in $\Z_{\geq0}^{3}$ similarly to the correspondence in Section \ref{sec: line}.

\subsection*{The three variable case}
In the three dimensional lattice, we have \textit{rays} of the form
\[P_{a,b}^{i_0,i_1}=\{x_{i_0}^{a}x_{i_1}^{b}x_{i_2}^{c}|c\in\Z_{\geq 0}\}\text{ for some fixed }a,b\in\Z_{\geq0}.\]
%Note, the superscripts $i_0$ and $i_1$ denote the variables which have a fixed power.  
On the other hand, we have \textit{quadrants} of the form
\[P_{a}^{i_0}=\{x_{i_0}^{a}x_{i_1}^{b}x_{i_2}^{c}|b,c\in\Z_{\geq 0}\}\text{ for some fixed }a\in\Z_{\geq0}.\]
%Again, note the superscript $0$ denotes the  variable which has a fixed power.\\  
We want to see how the complement of a monomial ideal in this case is the union of quadrants and rays; that is the content of the following lemma.

\begin{lemma}
\label{lem: plane translation}
Given the ring $R = \mathbb{C}[x_0,x_1,x_2]$ and a saturated monomial ideal \\$I=(x_{0}^{\alpha_{0}^{1}}x_{1}^{\alpha_{1}^{1}}x_{2}^{\alpha_{2}^{1}},...,x_{0}^{\alpha_{0}^{m}}x_{1}^{\alpha_{1}^{m}}x_{2}^{\alpha_{2}^{m}})$, then the set of monomials not in $I$ is of the form \\$\big(\bcup_{i\in\Z} P_{p_{1}^{i},p_{2}^{i}}^{c_{1}^{i},c_{2}^{i}}\big)\bigcup\big(\bcup_{j\in\Z} P_{p_{1}^{j}}^{c_{1}^{j}}\big)$ where 
\[P_{a,b}^{i_0,i_1}=\{x_{i_0}^{a}x_{i_1}^{b}x_{i_2}^{c}|c\in\Z_{\geq 0}\}\text{ for some fixed }a,b\in\Z_{\geq0}.\]
%Note, the superscripts $i_0$ and $i_1$ denote the variables which have a fixed power.  
\[P_{a}^{i_0}=\{x_{i_0}^{a}x_{i_1}^{b}x_{i_2}^{c}|b,c\in\Z_{\geq 0}\}\text{ for some fixed }a\in\Z_{\geq0}.\]
These are equivalent to quadrants and rays respectively in the $\Z_{\geq0}^{3}$ lattice where each point in our lattice $(y_{0},y_{1},y_{2})$ corresponds to the monomial $x_{0}^{y_{0}}x_{1}^{y_1}x_{2}^{y_{2}}$. 
\end{lemma}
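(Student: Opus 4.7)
The plan is to prove the lemma via primary decomposition of the monomial ideal $I$. Since $I$ is a saturated monomial ideal in $R=\mathbb{C}[x_0,x_1,x_2]$, the irrelevant ideal $(x_0,x_1,x_2)$ is not associated to $I$, so the monomial primes appearing are of height at most $2$. I would therefore write $I=\bigcap_k \mathfrak{q}_k$, where each $\mathfrak{q}_k$ is a monomial primary ideal whose radical is either $(x_{i_0})$ or $(x_{i_0},x_{i_1})$ for some choice of indices.

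Next I would invoke the standard structural fact that a primary monomial ideal with radical $(x_{i_0},\ldots,x_{i_\ell})$ has all of its minimal generators supported only in the variables $x_{i_0},\ldots,x_{i_\ell}$. This reduces the primary components to two cases: either $\mathfrak{q}_k=(x_{i_0}^{a})$ for some $a\geq 1$, or $\mathfrak{q}_k$ is a monomial ideal in exactly two of the variables, say $x_{i_0}$ and $x_{i_1}$, containing some pure power of each.

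The key observation is that the complement (as a set of monomials) of an intersection of monomial ideals equals the union of the individual complements, since a monomial lies in $\bigcap_k \mathfrak{q}_k$ exactly when it lies in every $\mathfrak{q}_k$. It then suffices to describe the complement of each primary component. The complement of $(x_{i_0}^a)$ is $\bigcup_{c=0}^{a-1} P_c^{i_0}$, a finite disjoint union of quadrants. The complement of a $(x_{i_0},x_{i_1})$-primary monomial ideal is the set of monomials $x_{i_0}^c x_{i_1}^d x_{i_2}^e$ with $(c,d)$ running over the finitely many lattice points not in the staircase of the two-variable projection and $e\geq 0$ arbitrary; this is a finite union of rays of the form $P_{c,d}^{i_0,i_1}$. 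Taking the union over all $k$ then exhibits the complement of $I$ in precisely the form claimed.

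The main obstacle is invoking the structure theorem for monomial primary ideals together with primary decomposition specialized to monomial ideals; these are essentially black-box facts, but they are what make the argument clean. A more hands-on attempt — trying to build quadrants and rays directly from individual monomials $\bar{x}\notin I$ by using saturation in the form \emph{``there exists $i$ with $\bar{x}\cdot x_i^N\notin I$ for all $N$''} — runs into the subtlety that the existence of two independent ``free directions'' at a point does not guarantee a full quadrant through that point, as the example $I=(x_0x_1,x_2)$ shows (where $1,x_0^a,x_1^b$ all avoid $I$ but $x_0x_1$ does not). The primary decomposition route sidesteps this issue entirely.
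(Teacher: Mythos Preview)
Your argument is correct and takes a genuinely different route from the paper. The paper works from the \emph{generator} description: writing $I=\bigcup_\ell I_\ell$ with each $I_\ell$ principal, it passes to complements via De Morgan, obtaining an intersection $\bigcap_\ell (A_\ell\cup B_\ell\cup C_\ell)$ of unions of axis-stacks of quadrants, then expands this by distributivity into a $3^m$-term union and classifies each intersection by how many of the three axis types appear (three types gives a finite set, two gives rays, one gives quadrants). You instead work from an \emph{intersection} description via monomial primary decomposition, so that passing to complements yields a union directly, with each piece already identified as a stack of quadrants (height-one primary) or a finite union of rays (height-two primary).

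Your approach uses the saturation hypothesis more sharply: it is exactly what excludes $\mathfrak{m}$-primary components and hence rules out finite residual pieces. In the paper's argument the ``all three types'' case produces finite sets which are dismissed on the grounds that they do not affect the Hilbert polynomial; strictly speaking this leaves a small gap relative to the lemma as stated (which asserts equality of sets, not merely agreement in large degree), though one your decomposition shows is harmless since every monomial outside $I$ already lies in some ray or quadrant coming from a primary component. The trade-off is that you invoke the structure of monomial primary ideals as a black box, whereas the paper's computation is entirely elementary set algebra and does not name primary decomposition at all.
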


\begin{proof}First consider the monomial ideal generated by a single monomial $I=(x_{0}^{\alpha}x_{1}^{\beta}x_{2}^{\gamma})$.
A monomial $\overline{x}= x_0^a x_1^b x_2^c$ is not in $I$ if and only if $a<\alpha$, $b<\beta$, or $c<\gamma$.
In other words,
\[\overline{x} \not \in I \text{ if and only if } \overline{x}\in \left(\bcup_{i=0}^{\alpha-1}P_{i}^{0}\right)\cup\left(\bcup_{j=0}^{\beta-1}P_{j}^{1}\right)\cup\left(\bcup_{k=0}^{\gamma-1}P_{k}^{2}\right).\]
Now let's consider a general monomial ideal, which we denote $I=(x_{0}^{\alpha_{1}}x_{1}^{\beta_{1}}x_{2}^{\gamma_{1}},...,x_{0}^{\alpha_{m}}x_{1}^{\beta_{m}}x_{2}^{\gamma_{m}})$ where $\alpha_{l},\beta_{l},\gamma_{l}\in\Z_{\geq0}\;,\;\forall l\in\{1,...,m\}$.  
Defining $I_{l}=(x_{0}^{\alpha_{l}}x_{1}^{\beta_{l}}x_{2}^{\gamma_{l}})$ for $l\in\{1,..,m\}$, we notice that $I=\bcup_{l=1}^{m}I_{l}$.
By De Morgan's Law, we see that 
%\[\R[x_{0},x_{1},x_{2}]/ I=\R[x_{0},x_{1},x_{2}]/ \left(\bcup_{l=1}^{m}I_{l}\right)=\bcap_{l=1}^{m}\R[x_{0},x_{1},x_{2}]/ I_{l}\]
\[m\not \in I \text{ if and only if }=m \in \bcap_{l=1}^{m}\left[\left(\bcup_{i=0}^{\alpha_{l}-1}P_{i}^{0}\right)\cup\left(\bcup_{j=0}^{\beta_{l}-1}P_{j}^{1}\right)\cup\left(\bcup_{k=0}^{\gamma_{l}-1}P_{k}^{2}\right)\right].\]
To make set manipulation simpler, we define \[A_l=\bcup_{i=0}^{\alpha_{l}-1}P_{i}^{0},B_l=\bcup_{j=0}^{\beta_{l}-1}P_{j}^{1},\text{ and } C_l=\bcup_{k=0}^{\gamma_{l}-1}P_{k}^{2}.\]
Using this, we can rewrite the compliment of $I$ as 
\[\overline{x} \not \in I \text{ if and only if } \overline{x}\in \bcap_{l=1}^{m}\left(A_{l}\cup B_{l}\cup C_{l}\right).\]
Recall that by the distributive property of set intersections over unions this intersection is equivalent to a union of $3^m$ sets of size $m$. 
%There are $3^m$ unions since $3^{m}$ is equivalent to the number of ways of putting $m$ distinct objects into $3$ distinct bins.  
%In this combinatorial expression, the $m$ distinct objects are the indices in the intersection above and the $3$ distinct bins are $A,B,C$.

\medskip\noindent
In order to formally write this, we need some notation.
Recall the symmetric group on $m$ letters, denoted $S_m$, is the set of permutations of the set $\{1,\cdots,m\}$.
Define the subset $G_{2,m}$ of $S_m$ as the set of permutations $\sigma$ of $\{1,\cdots,m\}$ such that $\sigma_1 <\cdots <\sigma_k$, $\sigma_{k+1}<\cdots <\sigma_l$, and $\sigma_{l+1}<\cdots <\sigma_m$ for some $1\leq k \leq l \leq m$, i.e. permutations which decrease at most twice.
With this notation, we can write the complement of $I$ as 
\[\overline{x} \not \in I \text{ if and only if } \overline{x}\in  \bigcup_{g \in G_{2,m}} \left( \left(\bigcap_{j=1}^k A_{g(j)}\right) \bigcap \left(\bigcap_{j=k+1}^l B_{g(j)}\right) \bigcap \left(\bigcap_{j=l+1}^m C_{g(j)}\right)\right).\]

\medskip\noindent
We must consider 3 possible cases for our $m$ sized intersections depending on the values of $k$ and $l$.

\medskip\noindent
\textbf{CASE I:} Consider the case where $0<k<l<m$. An intersection of such a form  will contain finitely many monomials.  Since we are interested only in saturated ideals we can ignore this case since the Hilbert polynomial $H_{I}(d)$ wont consider the elements in this intersection.

\medskip \noindent
\textbf{CASE II:}
Next, consider any $m$-intersection of the form $0=k<l<m$, $0<k=l<m$, or $0<k<l=m$.
%For clarity, consider the $m$-intersection below and notice that the same argument applies for any $n$-intersection which contains only two of the three bins, since A,B, and C were defined arbitrarily. 
%We work out the case of an element $g \in G_{2,m}$ with  $0<k<l=m$.
If $0<k<l=m$, the intersection is of the following form:
\[A_{i_1}\cap A_{i_2}\cap \cdots \cap A_{i_k}\cap B_{i_{k+1}}\cap \cdots \cap B_{i_{m-1}}\cap B_{i_m}.\]

\medskip\noindent
Let $\theta_{g,0}=\min\{\alpha_{i_1},...,\alpha_{i_k}\}$ and $\theta_{g,1}=\min\{\beta_{i_{k+1}},...,\beta_{i_m}\}$. 
The above intersection is now equivalent to 
\[\left(\bcup_{i=0}^{\theta_{g,0}-1}P_{i}^{0}\right)\cap\left(\bcup_{j=0}^{\theta_{g,1}-1}P_{j}^{1}\right)=\bcup_{0\leq i\leq \theta_{g,0}-1\;,\;0\leq j\leq\theta_{g,1}-1}P_{i,j}^{0,1}.\]
Similarly, if  $0<k=l<m$ and $\theta_{g,0}=\min\{\alpha_{i_1},...,\alpha_{i_k}\}$ and $\theta_{g,2}=\min\{\gamma_{i_{k+1}},...,\gamma_{i_m}\}$, then the intersection above is now equivalent to 
\[\left(\bcup_{i=0}^{\theta_{g,0}-1}P_{i}^{0}\right)\cap\left(\bcup_{j=0}^{\theta_{g,2}-1}P_{j}^{2}\right)=\bcup_{0\leq i\leq \theta_{g,0}-1\;,\;0\leq j\leq\theta_{g,2}-1}P_{i,j}^{0,2}.\]
Finally, if  $0=k<l<m$ and $\theta_{g,1}=\min\{\beta_{i_1},...,\beta_{i_k}\}$ and $\theta_{g,2}=\min\{\gamma_{i_{k+1}},...,\gamma_{i_m}\}$, then the intersection above is now equivalent to 
\[\left(\bcup_{i=0}^{\theta_{g,1}-1}P_{i}^{1}\right)\cap\left(\bcup_{j=0}^{\theta_{g,2}-1}P_{j}^{2}\right)=\bcup_{0\leq i\leq \theta_{g,1}-1\;,\;0\leq j\leq\theta_{g,2}-1}P_{i,j}^{1,2}.\]

\medskip\noindent
\textbf{CASE III:}
Lastly, consider any $m$-intersection with $0<k=l=m$, $0=k<l=m$, or $0=k=l<m$.
%We work out the case of $1<k=l=m$ fully and note that the other two cases work similarly.
If $0<k=l=m$, we have the the intersection
\[A_{1}\cap \cdots \cap A_{m}\]
Now let $\theta_0=\min\{\alpha_{1},\cdots,\alpha_{n}\}$ and note that
\[A_{1}\cap \cdots \cap A_{m}=\bcup_{i=0}^{\theta_0-1}P_{i}^{0}.\]
Similarly, if $0=k<l=m$ and we set $\theta_1=\min\{\beta_{1},\cdots,\beta_{n}\}$, then
\[B_{1}\cap \cdots \cap B_{m}=\bcup_{i=0}^{\theta_1-1}P_{i}^{1}.\]
Finally, if $0=k=l<m$ and we set $\theta_2=\min\{\gamma_{1},\cdots,\gamma_{n}\}$, then
\[C_{1}\cap \cdots \cap C_{m}=\bcup_{i=0}^{\theta_2-1}P_{i}^{2}.\]

\medskip\noindent
Since every intersection is one of those three cases, the result follows.
\end{proof}

\medskip\noindent
This lemma means that counting saturated monomial ideals on the plane with fixed Hilbert polynomial is equivalent to counting quadrants and rays whose complement is the monomials of an ideal with that Hilbert polynomial.
We want to bound the exact number of quadrants and rays that will give the correct Hilbert polynomial, but first we need to know some more information about Hilbert polynomials when $n=2$. 

\begin{proposition}
\label{prop: planeCoefficients}
Suppose $p(d)$ is a polynomial in $d$ such that $p(d)=Md-r$ for some $M,r \in \R$, then $p(d)$ is a Hilbert polynomial if and only if $M\in \Z_{\geq 0}$, $r\in \Z$ and
\[r\leq \frac{M^2-3M}{2}\]
and therefore $\lambda$ takes the form 
\[\lambda = (2^{M},1^{\frac{M^2-3M}{2}-r}).\]
\end{proposition}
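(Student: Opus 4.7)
The plan is to use Macaulay's theorem (stated earlier as Theorem 2.1) directly, which restricts the allowable partitions when $n=2$ to those with parts in $\{1,2\}$, so $\lambda = (2^a,1^b)$ for some $a,b\in\Z_{\geq 0}$. Writing out the Macaulay formula for such a $\lambda$ produces a Hilbert polynomial linear in $d$, and matching coefficients with $Md-r$ will force $M=a$ and $b=\tfrac{M^2-3M}{2}-r$. The single constraint then comes from requiring $b\geq 0$.

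More concretely, I would first observe that for $n=2$ the hypothesis $n\geq \lambda_1\geq\cdots\geq\lambda_r\geq 1$ in Macaulay's theorem forces every $\lambda_i\in\{1,2\}$, so we may write $\lambda=(2^a,1^b)$ with $a,b\geq 0$. I would then split the Macaulay sum into the $a$ contributions from the 2's and the $b$ contributions from the 1's. The $2$-part contributes
\[
\sum_{i=1}^{a}\binom{d+2-i}{1}=\sum_{i=1}^{a}(d+2-i)=ad+\frac{3a-a^2}{2},
\]
while the $1$-part contributes
\[
\sum_{i=a+1}^{a+b}\binom{d+1-i}{0}=b.
\]
Adding these gives $p_\lambda(d)=ad-\tfrac{a^2-3a}{2}+b$.

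Now I would run the matching in both directions. If $\lambda=(2^a,1^b)$ produces $p_\lambda(d)=Md-r$, then reading off coefficients yields $M=a\in\Z_{\geq 0}$, $r=\tfrac{M^2-3M}{2}-b\in\Z$, and $r\leq\tfrac{M^2-3M}{2}$ since $b\geq 0$. Conversely, if $M\in\Z_{\geq 0}$, $r\in\Z$, and $r\leq\tfrac{M^2-3M}{2}$, then setting $a:=M$ and $b:=\tfrac{M^2-3M}{2}-r$ gives nonnegative integers (here one should verify $b\in\Z$, which follows because $M^2-3M=M(M-3)$ is even for every integer $M$), and the computation above shows that $\lambda=(2^M,1^b)$ has Hilbert polynomial exactly $Md-r$. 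This also exhibits the claimed form of $\lambda$.

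The proof is essentially a coefficient-matching exercise, so I do not foresee a real obstacle; the only minor subtlety is checking that $\tfrac{M^2-3M}{2}$ is automatically an integer so that the stated inequality correctly captures integrality of $b$ rather than imposing an additional parity constraint. Once that parity observation is in place, the equivalence is immediate from Macaulay's theorem and the direct computation of $p_\lambda$.
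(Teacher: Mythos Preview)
Your proposal is correct and follows essentially the same route as the paper: both reduce to the observation that any admissible $\lambda$ has parts in $\{1,2\}$, compute $p_\lambda(d)=ad-\tfrac{a^2-3a}{2}+b$ for $\lambda=(2^a,1^b)$, and match coefficients in both directions. Your version is slightly more careful in that you explicitly verify $M(M-3)$ is even so that $b\in\Z$, a point the paper leaves implicit.
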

\begin{proof}
We first prove the forward direction.
Assume $p(d)$ is a Hilbert polynomial. 
By Macaulay \cite{M}, this implies that there exists a lambda sequence, say $\lambda=(2^{A},1^{B})$, where $A,B\in \Z_{\geq0}$ such that
\[p(d)=\sum_{i=1}^{A}\binom{d+2 -i}{1} +\sum_{i=A+1}^{A+B}\binom{d+1-i}{0}\]
Simplifying the expression for $p(d)$, we get
%\[p(d)=\sum_{i=1}^{A}(d+2-i)+\sum_{i=A+1}^{A+B}1\]
%\[p(d)=Ad +2A - \frac{A(A+1)}{2}+B\]
\[p(d)=Ad-\left(\frac{A^2-3A}{2}-B\right)=Md-r\]
Thus, we see here that $M=A$ and $r=\frac{A^2-3A}{2}-B$. Since $A,B\in\Z_{\geq0}$ and $p(d)$ is a Hilbert polynomial, we have that $M \in \Z_{\geq0}$ and $r=B-\frac{A(A-3)}{2} \in \Z$.
Since $B\geq0$, we get that $\frac{M^2-3M}{2}\geq r$.

\medskip\noindent
Now we proceed with the reverse direction.
Let $p(d)=Md-r$, $M\in \Z_{\geq0}$, $r\in \Z$, and
$r\leq \frac{M^2-3M}{2}$.
Then $p(d)$ is  a Hilbert polynomial because we can choose 
$\lambda=(2^{M},1^{\frac{M^2-3M}{2}-r})$.
With that choice, we get that
\[H_{\lambda}(d)= \sum_{i=1}^{\frac{M^2-3M}{2}-r+M}\binom{d+\lambda_i-i}{\lambda_i-1} = \sum_{i=1}^{M}(d+2-i) + \sum_{i=M+1}^{\frac{M^2-3M}{2}-r+M}(1) = Md-r=p(d).\]
%By construction, for this polynomial $p(d)$ with these conditions, the $\lambda$ exists and therefore is a Hilbert polynomial.
\end{proof}

\medskip\noindent
Next, we know from Lemma \ref{lem: plane translation} that the number of saturated monomial ideals in three variables for a given Hilbert polynomial $p(d)$ is equivalent to the number of ways of laying a some number of quadrants and rays in the $\Z_{\geq0}^3$ lattice. So, now we establish the connection between a given Hilbert polynomial and the exact number of quadrants and rays that will be placed in the $\Z_{\geq0}^{3}$ lattice.

\begin{lemma}
\label{lem: plane equiv}
In $R=\mathbb{C}[x_0,x_1,x_2]$, given a Hilbert polynomial $p_\lambda$ with associated lambda partition $\lambda=(2^{A},1^{B})$, the number of saturated monomial ideals in $R$ with associated Hilbert polynomial $p_\lambda$ is equivalent to the number of ways of stacking $A$ quadrants and $B$ rays in $\Z_{\geq0}^{3}$.
\end{lemma}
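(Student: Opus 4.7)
My plan is to combine Lemma \ref{lem: plane translation} with a direct Hilbert polynomial computation, invoking Proposition \ref{prop: planeCoefficients} at the end to match parameters. Lemma \ref{lem: plane translation} already shows that the complement of any saturated monomial ideal in $R$ decomposes as a union of finitely many quadrants $P_a^i$ and rays $P_{a,b}^{i,j}$. Because the set of monomials not in $I$ must be closed under taking divisors, along each axis $x_i$ the quadrants appearing are necessarily $P_0^i, P_1^i, \ldots, P_{A_i-1}^i$ for some $A_i \geq 0$, and in each coordinate plane the rays not already absorbed by a quadrant form a 2D staircase positioned beyond the stacks. Conversely, any such stacking arrangement of $A = A_0 + A_1 + A_2$ quadrants together with three 2D ray-staircases of total size $B$ determines a unique saturated monomial ideal, so the ideals are in bijection with stacking arrangements of $A$ quadrants and $B$ rays.

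The heart of the proof is then to compute the Hilbert polynomial of such a configuration and verify it depends only on the totals $A$ and $B$. Let $Q_i = \bigcup_{a=0}^{A_i - 1} P_a^i$ and $Q = Q_0 \cup Q_1 \cup Q_2$. For large $d$, inclusion-exclusion gives
\[
|Q|_d = \sum_{i} A_i(d+1) - \sum_{i}\binom{A_i}{2} - \sum_{i<j}A_iA_j = A(d+1) - \binom{A}{2},
\]
where the triple intersection $Q_0 \cap Q_1 \cap Q_2$ is a finite set and so contributes $0$ for large $d$, and the final equality is the identity $\sum_i \binom{A_i}{2} + \sum_{i<j}A_iA_j = \binom{A}{2}$. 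Each of the $B$ rays not covered by $Q$ adds exactly one monomial in each sufficiently large degree, so the Hilbert polynomial of the configuration is $p(d) = Ad - \frac{A^2 - 3A}{2} + B$. By Proposition \ref{prop: planeCoefficients} this is precisely $p_\lambda$ with $\lambda = (2^A, 1^B)$, so the stacking arrangements with totals $(A, B)$ correspond exactly to saturated monomial ideals with Hilbert polynomial $p_\lambda$.

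The main subtlety I expect is the bookkeeping around overlaps: Lemma \ref{lem: plane translation} lists rays and quadrants that may redundantly cover the same monomials, so before invoking the count I must normalize the description so that $B$ counts exactly the rays lying outside $Q$. Once this normalization is in place, the inclusion-exclusion identity above collapses cleanly and the comparison with Proposition \ref{prop: planeCoefficients} is immediate.
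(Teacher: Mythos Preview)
Your argument is correct and follows the same overall strategy as the paper: compute the Hilbert polynomial of a configuration of $A$ stacked quadrants and $B$ extra rays, then match it to $p_\lambda$ via Proposition~\ref{prop: planeCoefficients}. The difference is in how the quadrant contribution is computed. The paper orders the $A$ quadrants and argues that the $i$-th one contributes exactly $\binom{d+2-i}{1}=d+2-i$ after subtracting overlaps with the previously listed (non-parallel) quadrants, so the total contribution is $\sum_{i=1}^{A}\binom{d+2-i}{1}$, which is already the $\lambda$-expansion for $(2^A)$. You instead apply inclusion--exclusion to the three axis-stacks $Q_0,Q_1,Q_2$ and collapse the cross terms with the identity $\sum_i\binom{A_i}{2}+\sum_{i<j}A_iA_j=\binom{A}{2}$, obtaining $A(d+1)-\binom{A}{2}$ and then invoking Proposition~\ref{prop: planeCoefficients}. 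Your route makes the independence from the individual $A_i$ transparent in one line, while the paper's sequential bookkeeping has the advantage of matching term-by-term with the Macaulay form $\sum\binom{d+\lambda_i-i}{\lambda_i-1}$, which is the pattern reused in the general $n$-orthant arguments of Section~\ref{sec: general}. Your explicit remark that the quadrants must appear as consecutive stacks $P_0^i,\ldots,P_{A_i-1}^i$ (by divisor-closure) and your normalization of $B$ to count only rays outside $Q$ are points the paper leaves implicit; making them explicit is an improvement.
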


\begin{proof}
Consider a Hilbert polynomial $p_\lambda$. By Theorem 4.2 above we note that this Hilbert polynomial must take the form $p_\lambda(d)=Md-r$ where $A= M\in\Z_{\geq0}$, $r\in\Z$, and $B = \frac{M^2-3M}{2} - r \geq 0$. %Furthermore we know that \[\lambda=(2^{M},1^{\frac{M^2-3M}{2}-r})\]

\medskip\noindent
Any quadrant in $\Z_{\geq0}^3$, say $P_{a}^{i_0}$ represents all monomials of the form $x_{i_0}^{a}x_{i_1}^{\rho}x_{i_2}^{\eta}$ where $a$ is fixed and $\rho,\eta \in \Z_{\geq0}$ are variable. We will consider listing the quadrants in order subject to listing $P_{a}^{i_j}$ before $P_{a+1}^{i_j}$ for all $a$ and $j$.
Thus, the number of monomials of degree d in which $x_{i_0}$ has degree a is $\binom{d-a+1}{1}=d-a+1$. 
However, if $P_{a}^{i_j}$ is the $i$-th quadrant we list, then it has $i-a$ monomials of degree $d$ in common with previously listed quadrants (one for each quadrant which is not parallel to it). Putting this together, the $i$-th listed quadrant contributes $d-a-(i-1-a)+1 = d+2-i$ to the Hilbert polynomial.
Note, by the \textit{contribution} of a ($k$-)orthant we mean the number of monomials of degree $d$ for $d>>0$ in that ($k$-)orthant.
This implies that in order for the linear term of the Hilbert polynomial $p_\lambda(x)$ to have a coefficient of $M$, there must be $M$ distinct quadrants in the complement of the ideal as the rays do not contribute to the linear term.
Then the contribution of all of the quadrants to the Hilbert polynomial is $\sum_{i=1}^M \binom{t+2-i}{2-1}$. 

\medskip\noindent
Then the rays must contribute the remaining part of the constant term which is easily seen to be $B$.
Since rays in $\Z_{\geq0}^3$ contain only one point in the $\Z_{\geq0}^{3}$ lattice for each $d>0$, they contribute 1 each to the Hilbert polynomial.
Thus, the monomials not in the ideal consist of exactly $A$ quadrants and $B$ rays.
\end{proof}

\medskip\noindent
Thus, given a Hilbert polynomial $p_{\lambda}$ with associated lambda partition $\lambda=(2^{A},1^{B})$, $A,B\in \Z_{\geq0}$ we note that the number of saturated monomial ideals for this Hilbert polynomial in $R=\mathbb{C}[x_0,x_1,x_2]$ is equivalent to number of ways of stacking $A$ quadrants and $B$ rays in $\Z^3_{\geq0}$.
Finally, we can establish the following proposition, which is case (3) of Theorem \ref{thm: main}.

\begin{proposition}\label{prop: 1}
Given the ring $R = \mathbb{C}[x_0,x_1,x_2]$ and the partition $\lambda = (2^{m},1^{r})$, the number of saturated monomial ideals of $R$ with Hilbert polynomial $p_\lambda$ is exactly
\[\binom{m+2}{2}\cdot \sum_{c_1+c_2+c_3=r}\Big[f_1(c_1)\cdot f_1(c_2)\cdot f_1(c_3)\Big]\]
where $c_1,c_2,c_3 \in \Z_{\geq0}$ and 
$f_1: \Z_{\geq 0}\to \Z_{>0}$ 
is the function which maps an integer $c$ to the number of integer partitions of $c$.
\end{proposition}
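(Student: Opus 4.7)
The plan is to apply Lemma \ref{lem: plane equiv}, which reduces the enumeration to counting arrangements of $m$ quadrants and $r$ rays in $\Z_{\geq 0}^{3}$ whose union is downward-closed. I would factor the count as (quadrant arrangements) $\times$ (ray arrangements given quadrants), and further factor the ray count by the axis to which each ray is parallel.

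First, count quadrant arrangements. Downward closure forces the quadrants along axis $x_{i}$ to form an initial stack $P_{0}^{i}, P_{1}^{i}, \ldots, P_{k_{i}-1}^{i}$, so an arrangement of $m$ quadrants is determined by a weak composition $(k_{0}, k_{1}, k_{2})$ with $k_{0} + k_{1} + k_{2} = m$. As in the proof of Proposition \ref{prop: line}, a stars-and-bars argument gives $\binom{m+2}{2}$ such compositions.

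Next, count ray arrangements given a fixed quadrant configuration $(k_{0}, k_{1}, k_{2})$. Group the $r$ rays by the axis they are parallel to, and let $c_{1}, c_{2}, c_{3}$ denote the number parallel to $x_{0}, x_{1}, x_{2}$ respectively, so $c_{1} + c_{2} + c_{3} = r$. A ray $P_{a,b}^{j,k}$ parallel to $x_{i}$ (where $\{i,j,k\}=\{0,1,2\}$) with $a < k_{j}$ or $b < k_{k}$ is set-theoretically contained in a quadrant already present, so the genuine ray positions live in $\Z_{\geq k_{j}} \times \Z_{\geq k_{k}}$. Downward closure of the complement of the ideal then forces these positions, after translation by $(k_{j}, k_{k})$, to form a Young diagram in $\Z_{\geq 0}^{2}$. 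The number of Young diagrams with $c_{i}$ cells equals the number of integer partitions of $c_{i}$, namely $f_{1}(c_{i})$. Multiplying over the three axes and summing over splittings gives the factor $\sum_{c_{1}+c_{2}+c_{3}=r} f_{1}(c_{1}) f_{1}(c_{2}) f_{1}(c_{3})$, and combining with the quadrant count produces the claimed formula.

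The step I expect to be the most delicate is verifying that the three directional Young diagrams can be chosen independently: concretely, one needs to check that the downward closure of the complement does not couple rays parallel to different axes. A ray parallel to $x_{0}$ at a valid transverse position ought to force, via the componentwise partial order, only rays parallel to $x_{0}$ at componentwise-dominated transverse positions together with points already covered by the stacked quadrants, and it should impose no constraint on the rays parallel to $x_{1}$ or $x_{2}$. This independence should follow from the case analysis already carried out in Lemma \ref{lem: plane translation}, but it is the nontrivial content that allows the overall count to split into the clean product form.
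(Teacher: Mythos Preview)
Your proposal is correct and follows essentially the same route as the paper's proof: reduce via Lemma~\ref{lem: plane equiv}, count quadrant stacks by a weak-composition/stars-and-bars argument to get $\binom{m+2}{2}$, then for each direction count ray placements as Young diagrams (integer partitions) after translating past the quadrant stack, and multiply. The paper likewise treats the $r=0$ and $m=0$ cases first and then argues that the quadrant choice merely shifts the ray region; your flagged ``delicate'' independence of the three directional Young diagrams is handled in the paper at the same level of rigor you outline, namely by appeal to the downward-closure analysis of Lemma~\ref{lem: plane translation}.
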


\begin{proof}
First, consider the case where $r=0$. 
In that case, the lambda partition of $p_\lambda$ is of the form $\lambda=(2^{m})$. 
By the previous lemma, we know that the number of saturated ideals $I\in R=\mathbb{C}[x_0,x_1,x_2]$ is equivalent to the number of ways to lay $m$ quadrants in $\Z^{3}_{\geq0}$, or in other words, the number of unique sets
\[\left(\bigcup_{i=0}^{\alpha-1}P_i^{0}\right)\cup\left(\bigcup_{j=0}^{\beta-1}P_j^{0}\right)\cup\left(\bigcup_{k=0}^{\psi-1}P_k^{0}\right)\]
where $\alpha+\beta+\psi=m$.\\

\medskip\noindent
Clearly, the uniqueness of each set is determined by the assignment of $\alpha,\beta$ and $\psi$. 
Thus, for the case where $r=0$, we get that the number of saturated monomial ideals for $p_{\lambda}$ for $n=2$ is
$\binom{m+2}{2}$.

\medskip\noindent
Next, consider when $m=0$, i.e. when $\lambda=(1^{r})$. 
By the previous lemma, the number of saturated monomial ideals with Hilbert polynomial $p_{\lambda}$ for $n=2$ is equivalent to the number of ways to stack $r$ rays in $\Z^3_{\geq0}$. 
Much like for the case where $r=0$, we note that there are only 3 forms of which these rays can be; for a fixed $a,b \in\Z_{\geq0}$ these are
\[P_{a,b}^{i_0,i_1}=\{x_{i_0}^ax_{i_1}^b{x_{i_2}^c | c\in \Z_{\geq0} }\}, P_{a,b}^{i_0,i_2}=\{x_{i_0}^ax_{i_1}^c{x_{i_2}^b | c\in \Z_{\geq0} }\}\text{, or } P_{a,b}^{i_1,i_2}=\{x_{i_0}^cx_{i_1}^a{x_{i_2}^b | c\in \Z_{\geq0} }\}.\]

\medskip\noindent
Visually, these $r$ rays can only extend in one of 3 directions in the $\Z_{\geq0}^3$ lattice: the $x_{i_0}$, the $x_{i_1}$, or the $x_{i_2}$ direction. 
In order to count the number of ways to pick $r$ rays, we first count the ways to divide the rays into one of the 3 directions, and then count the number of ways to orient the rays in each direction.  
These two counts are independent of each other.
It is easy to see that there is $\binom{r+2}{2}$ unique distributions of these $r$ rays into the $x_{i_0},x_{i_1}$ and $x_{i_2}$ directions. % in the $\Z_{\geq0}^3$ lattice.

\medskip\noindent
If $P_{a,b}^{i_j,i_k}$ is in the complement of a saturated ideal then so are $P_{a-1,b}^{i_j,i_k}$ or $P_{a-1}^{i_j}$ unless $a=0$. 
Similarly, if $P_{a,b}^{i_j,i_k}$ is in the complement of a saturated ideal then so are $P_{a,b-1}^{i_j,i_k}$ or $P_{b-1}^{i_k}$ unless $b=0$. 
Given the previous facts and that we have already picked which quadrants are included, any valid choice of a set of rays in one direction is such that the lattice points of fixed degree in the rays form (the centers of the squares of) a Young diagram. 
In other words the number of valid arrangement of $k$ rays in one direction is the number of integer partitions of $k$.\\ 
%Given that the choice of a set of rays in one direction will not affect the choice of another set of rays in another, we can conclude that for a given distribution of $r$ rays in $\Z_{\geq0}^3$ the number of distinct orientations of those rays will be where $c_i\in \Z_{\geq0}$ is the number of rays stacked in the $x_i$ direction and where $c_0+c_1+c_2=r$ and $f_1(c_i)$ is the number of integer partitions of $c_i$. 
If we do this for each distinct distribution of rays then number of ways to stack $r$ rays in $\Z_{\geq0}^3$ is given by
\[\sum_{c_0+c_1+c_2=r}\big[f_1(c_0)\cdot f_1(c_1)\cdot f_1(c_2) \big]\]

\medskip\noindent
Lastly, when $m$ and $r$ are both non-zero, we find that the choice of the $m$ quadrants will not affect the number of choices of the $r$ rays. To see this, recall that a given choice of the $m$ quadrants in $\Z_{\geq0}^3$ is of the form
\[\Bigg(\bigcup_{i=0}^{\alpha-1}P_i^{0}\Bigg)\cup\Bigg(\bigcup_{j=0}^{\beta-1}P_j^{0}\Bigg)\cup\Bigg(\bigcup_{k=0}^{\psi-1}P_k^{0}\Bigg)\]
where $\alpha+\beta+\psi=m$.

\medskip \noindent
This choice `shifts' the region in which the remaining $r$ rays can be placed. We can think about choosing the remaining $r$ rays in the $\Z_{\geq0}^3$ lattice in which the coordinate axes are defined by $x_0+\alpha, x_1+\beta$ and $x_2+\psi$. Thus, the number of ways to choose these remaining $r$ rays in this region is exactly the same as choosing $r$ rays when $m=0$, Thus, the placing of the $m$ quadrants is independent to the stacking of the $r$ rays. Thus, combining the earlier two cases, and by the general counting principle we have that there are 
\[\binom{m+2}{2}\cdot \sum_{c_1+c_2+c_3=r}\Big[f_1(c_1)\cdot f_1(c_2)\cdot f_1(c_3)\Big]\]
where $c_1,c_2,c_3 \in \Z_{\geq0}$ ways to stack $m$ quadrants and $r$ rays in $\Z_{\geq0}$. Along with Lemma \ref{lem: plane equiv}, this concludes the proof.
\end{proof}

\section{General Case}
\label{sec: general}
\noindent
In this section, we prove the remaining cases of Theorem \ref{thm: main}.

\medskip\noindent
We first note that case (7) of Theorem \ref{thm: main} is immediate as those Hilbert schemes are just a single reduced point.

\medskip\noindent
For the rest of the cases, we must first show that a saturated monomial ideal corresponds to a union of rays, orthants, 3-orthants, etc. in the the lattice $\mathbb{Z}_{\geq 0}^{n+1}$.
\begin{prop}
Given the ring $R = \mathbb{C}[x_0,x_1,\dots,x_n]$ and a saturated monomial ideal\\ $I=(x_{0}^{\alpha_{0}^{1}}\dots x_{n}^{\alpha_{n}^{1}},\dots,x_{0}^{\alpha_{0}^{m}}\dots x_{n}^{\alpha_{n}^{m}})$, then the set of monomials not in $I$ is of the form $\bcup_{i=1}^M P_{p_{1}^{i},\dots,p_{r_i}^{i}}^{c_{1}^{i},\dots,c_{r_i}^{i}}$ for some positive integer $M$  for some fixed $p_{1},\dots,p_{n+1-k}\in\Z_{\geq0}$ where 
\[P_{p_{1},\dots,p_{n+1-k}}^{c_{1},\dots,c_{n+1-k}}=\{x_{c_{1}}^{p_{1}}\dots x_{c_{n+1-k}}^{p_{n+1-k}}x_{c_{n+2-k}}^{p_{n+2-k}}\dots x_{c_{n}}^{p_{n}}|p_{n+2-k},\dots ,p_{n}\in\Z_{\geq0}\}\]
is a $k$-orthant in a $n+1$ dimensional lattice where each point in our lattice $(y_{0},\dots,y_{n})$ corresponds to the monomial $x_{0}^{y_{0}}\dots x_{n}^{y_{n}}$. 
\end{prop}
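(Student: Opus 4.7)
The plan is to generalize the proof of Lemma 4.1 directly. For a single generator $m_l = x_0^{\alpha_0^l}\cdots x_n^{\alpha_n^l}$, the monomial $\bar x = x_0^{a_0}\cdots x_n^{a_n}$ fails to be a multiple of $m_l$ precisely when $a_j < \alpha_j^l$ for at least one index $j$. Setting $A_j^l := \bcup_{i=0}^{\alpha_j^l-1} P_i^j$ (the $n$-orthant of monomials whose $j$-th exponent is less than $\alpha_j^l$), the complement of $(m_l)$ in the monoid of monomials is therefore $A_0^l \cup A_1^l \cup \cdots \cup A_n^l$.

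For the full ideal $I = (m_1,\ldots,m_m)$, I would write its complement as
\[\bcap_{l=1}^m \bigl( A_0^l \cup A_1^l \cup \cdots \cup A_n^l \bigr)\]
and expand by distributivity of intersection over union. The resulting terms are indexed by functions $\sigma : \{1,\ldots,m\} \to \{0,1,\ldots,n\}$, each term being $\bcap_{l=1}^m A_{\sigma(l)}^l$. Grouping by the preimages $S_j := \sigma^{-1}(j)$ and setting $\theta_{\sigma,j} := \min_{l \in S_j}\alpha_j^l$ when $S_j \neq \emptyset$, each term collapses to
\[\bcap_{\{j\,:\,S_j \neq \emptyset\}} \Bigl( \bcup_{i=0}^{\theta_{\sigma,j}-1} P_i^j \Bigr).\]
When exactly $n+1-k$ of the $S_j$ are nonempty, expanding the remaining inner unions by distributivity produces a union of $k$-orthants of precisely the form $P^{c_1,\ldots,c_{n+1-k}}_{p_1,\ldots,p_{n+1-k}}$ with each $p_s$ running over $\{0,\ldots,\theta_{\sigma,c_s}-1\}$. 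This step exactly parallels Cases II and III in the proof of Lemma 4.1.

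The remaining terms, which are the analogues of Case I, are those where all $n+1$ of the preimages $S_j$ are nonempty; these produce finite boxes of monomials. The saturation hypothesis on $I$ rules out such boxes surviving in the final decomposition: any monomial $\bar x$ in the complement with every $\bar x \cdot x_j^{N_j} \in I$ satisfies $\bar x \cdot (x_0,\ldots,x_n)^N \subseteq I$ for $N \gg 0$, contradicting saturation. Hence every finite box produced by the expansion is already absorbed into the union of the $k$-orthants with $k \geq 1$, and the complement of $I$ is precisely a union of such $k$-orthants, as claimed.

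The main obstacle is the combinatorial bookkeeping required to organize the distributive expansion cleanly (the generalization of the sets $G_{2,m}$ from the three-variable proof becomes the set of all functions $[m] \to [n+1]$, stratified by how many fibers are nonempty) and to rigorously justify discarding the bounded contributions using saturation; conceptually everything is a direct lift from the three-variable case.
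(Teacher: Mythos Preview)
Your proposal is correct and follows essentially the same approach as the paper: De Morgan's law on the complement of the union of principal ideals, distributive expansion into intersections indexed by choice functions, and identification of each term as a union of $k$-orthants. Your indexing by functions $\sigma:[m]\to\{0,\dots,n\}$ is cleaner than the paper's $G_{n,m}$ notation, and your saturation argument for discarding the bounded boxes is actually more rigorous than the paper's, which only observes that those terms are invisible to the Hilbert polynomial for $d\gg 0$ rather than showing they are absorbed into the union of positive-dimensional orthants.
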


\begin{proof}
\noindent
Utilizing the same approach as before give us that a $k$-orthant in the $n+1$ dimensional lattice would be of the form
$$P_{p_{1},\dots,p_{n+1-k}}^{c_{1},\dots,c_{n+1-k}}=\{x_{c_{1}}^{p_{1}}\dots x_{c_{n+1-k}}^{p_{n+1-k}}x_{c_{n+2-k}}^{p_{n+2-k}}\dots x_{c_{n}}^{p_{n}}|p_{n+2-k},..,p_{n}\in\Z_{\geq0}\}\text{ for some fixed }p_{1},\dots,p_{n+1-k}\in\Z_{\geq0}$$
As with the previous approach let's begin by considering a monomial ideal generated by a single element $(x_{0}^{\alpha_{0}}\dots x_{n}^{\alpha_{n}})$. 
Then the monomials in the compliment of $I$ is 
\[\left(\bcup_{i_{0}=1}^{\alpha_{0}-1}P_{i_{0}}^{0}\right)\cup\dots\cup\left(\bcup_{i_{n}=1}^{\alpha_{n}-1}P_{i_{n}}^{n}\right).  \]
Now, let's consider a monomial ideal with $m$ generators $I=(x_{0}^{\alpha_{0}^{1}}\dots x_{n}^{\alpha_{n}^{1}},\dots,x_{0}^{\alpha_{0}^{m}}\dots x_{n}^{\alpha_{n}^{m}})$ and let $I_{\ell}=(x_{0}^{\alpha_{0}^{\ell}}\dots x_{n}^{\alpha_{n}^{\ell}})$ which then gives us that $I=\bcup_{\ell=1}^{m}I_{\ell}$.
By De Morgan's laws, we have that $I$'s complement in the lattice is
\[\overline{x} \not \in I \text{ if and only if } \bar{x}\in \bcap_{\ell=1}^{m}\left[\left(\bcup_{i_{0}=1}^{\alpha_{0}^{\ell}-1}P_{i_{0}}^{0}\right)\cup\dots\cup\left(\bcup_{i_{n}=1}^{\alpha_{n}^{\ell}-1}P_{i_{n}}^{n}\right)\right]\]
From here, let's simply our notation by setting 
\[A_{\ell}^{j}= \bcup_{i_{j}=1}^{\alpha_{j}^{\ell}-1}P_{i_{j}}^{j}.\]
Using this, we can rewrite the compliment of $I$ as 
\[\overline{x} \not \in I \text{ if and only if } \overline{x}\in \bcap_{\ell=1}^{m}\left(\bigcup_{j=1}^n A_{\ell}^j\right).\]
The intersection above then simplifies into a $(n+1)^{m}$ sized union of $m$ sized intersections.  

\medskip\noindent
In order to formally write this, we need some notation.
Recall the symmetric group on $m$ letters, denoted $S_m$, is the set of permutations of the set $\{1,\dots,m\}$.
Define the subset $G_{n,m}$ of $S_m$ as the set of permutations $\sigma$ of $\{1,\dots,m\}$ such that $\sigma_1 <\dots <\sigma_{k_1}$, $\dots$, and $\sigma_{k_{n-1}+1}<\dots <\sigma_n$ for some $1\leq k_1 \leq \dots \leq k_{n-1} \leq n$, i.e. permutations which decrease at most $n$ times.
With this notation, we can write the complement of $I$ as 
\[\overline{x} \not \in I \text{ if and only if } \overline{x}\in  \bigcup_{g \in G_{n,m}} \left( \left(\bigcap_{j=1}^{k_1} A_{g(j)}^1\right) \bigcap \dots \bigcap \left(\bigcap_{j=k_{n-1}+1}^m A_{g(j)}^{n}\right)\right).\]

\medskip\noindent
Now as before,  we observe that if a single one of these intersections has all possible $A^{i}$, then it contains finitely many monomials.  
Hence, we can ignore this case since for $d>>0$ the Hilbert polynomial $H_{I}(d)$ won't count these.
Next, consider an intersection of the form 
\[(A_{1}^{0}\cap\dots\cap A_{j_{1}}^{0})\cap (A_{j_{1}+1}^{1}\cap\dots\cap A_{j_{2}}^{1})\cap\dots\dots\cap (A_{j_{h}+1}^{h}\cap\dots\cap A_{j_{h+1}}^{h})\]
with $h<n$. 
If we let $\theta_{0}=\min\{\alpha_{0}^{1},\dots,\alpha_{0}^{j_{1}}\},\theta_{1}=\min\{\alpha_{1}^{j_{1}+1},\dots,\alpha_{1}^{j_{2}}\},\dots,\theta_{h}=\min\{\alpha_{h}^{j_{h}+1},\dots,\alpha_{h}^{j_{h+1}}\}$ then our intersection above is equivalent to
\[\left(\bcup_{l_{0}=0}^{\theta_{0}-1}P_{l_{0}}^{j_0}\right)\cap\dots\cap\left(\bcup_{l_{h}=0}^{\theta_{h}-1}P_{l_{h}}^{j_h}\right)=\bcup_{0\leq l_{0}\leq\theta_{0}-1,\dots,0\leq l_{h}\leq\theta_{h}-1}P_{l_{0},\dots,l_{h}}^{j_0,..,j_h}.\]
Thus, the intersections of this form correspond to an $n-h$-orthant.
Since any intersection in the union has this form for some $1\leq h \leq n-1$ (or is irrelevant to the saturation), the result follows.
\end{proof}

\medskip\noindent
Based on the case of the line and the orthant, one may naively expect that any arrangement of ($k$-)orthants giving the Hilbert polynomial $p_\lambda$ where $\lambda = (n^{k_n},\dots,1^{k_1})$ consists of $k_i$ many $i$-orthants, but this is incorrect.

\begin{exmp}\label{ex: 2 quadrants}
Consider the Hilbert polynomial $p(d)=2d+1$ with lambda sequence $\lambda(2^{[2]})$ and $n\geq 3$. 
If the naive correspondence holds true for all $n\geq3$, then we should expect that every choice of two quadrants in $\Z_{\geq0}^{n+1}$ will result in a Hilbert function $h(d)$ such that $h(d)=p(d)$ for some $d>>0$. 
However, consider the following two quadrants in $\Z_{\geq0}^{n+1}$ 
\[P_1=P_{0,0,0,...,0}^{2,3,4,...,n} \;\;\text{and}\;\; P_2=P_{0,0,0,...,0}^{0,1,4,...,n}\;\; \]

\medskip\noindent
Note that $P_1$ and $P_2$ do not intersect other than at the origin since recall that the elements of $P_1$ and $P_2$ are
\[P_1=\{1,x_0,x_1,x_0x_1,...,x_0^ax_1^b,...\} \;\;\text{for some } \;\; a,b\in \Z_{\geq0}\]
\[P_2=\{1,x_2,x_3,x_2x_3,...,x_2^cx_3^d,...\} \;\;\text{for some } \;\; c,d\in \Z_{\geq0}\]
Thus, $P_1$ and $P_2$ each contribute $d+1$ to the Hilbert polynomial for this specific monomial ideal $h(d)$.  
Thus, the Hilbert polynomial of the corresponding ideal must be $h(d)=(d+1)+(d+1)=2d+2$. 
Thus, the naive correspondence fails.
\end{exmp}

\medskip\noindent
In order to study the case where the naive correspondence does hold, or where we can salvage an adapted version of it, we first need a general remark

\begin{remark}
\label{rem: coeff}
Consider a $k$-orthant $K$ in the complement of a saturated monomial ideal $I$. 
The number of degree $d$ monomials in $K$ is $\binom{d+k-1}{k-1}$ so $K$'s contribution to the Hilbert polynomial only effects the coefficients of the terms with degree at most $k-1$ and adds nonnegatively to the degree $k-1$ coefficient.
\end{remark}

\medskip\noindent
Using this remark, we can show that the naive correspondence does hold for the $n$-orthants in the arrangement and the $n^k$ in the partition.
%We next determine the number of $n$-orthants in any arrangement of $k$-orthants, where $k$ can vary, that corresponds to a saturated monomial ideal with a fixed Hilbert polynomial.

\begin{lemma}
\label{lem: northants}
Let $p_\lambda$ be the Hilbert polynomial corresponding to the partition $\lambda = (n^k,\dots)$.
Then the complement of any saturated monomial ideal in $n+1$ variables with Hilbert polynomial $p_\lambda$ contains exactly $k$ many $n$-orthants.
\end{lemma}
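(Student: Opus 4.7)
The plan is to identify the number $N$ of $n$-orthants contained in the complement of $I$ by extracting it from the leading coefficient of the Hilbert polynomial $p_\lambda$, and matching it against a direct count of the contribution from the $n$-orthants via inclusion--exclusion.

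First, I would compute the leading coefficient of $p_\lambda$ directly from its expression $p_\lambda(d) = \sum_{i=1}^r \binom{d+\lambda_i-i}{\lambda_i-1}$. Each binomial summand is a polynomial in $d$ of degree $\lambda_i - 1$ with leading coefficient $1/(\lambda_i-1)!$. Because precisely $k$ of the $\lambda_i$ equal $n$ (and the remaining $\lambda_i$ are strictly less than $n$), the polynomial $p_\lambda$ has degree $n-1$ and its coefficient of $d^{n-1}$ equals $k/(n-1)!$.

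Second, I would apply the previous proposition to write the complement of $I$ as a finite union $\bcup_j P_j$ of orthants, each of dimension at most $n$, noting that an $(n+1)$-orthant cannot appear since $I$ is nonzero. For $d \gg 0$, the Hilbert function $h_I(d)$ counts monomials of degree $d$ in this union, and inclusion--exclusion expresses $h_I(d)$ as an alternating sum of $|\bcap_{j \in S} P_j|_d$. A $k'$-orthant has $\binom{d+k'-1}{k'-1}$ monomials of degree $d$, a polynomial of degree $k'-1$ in $d$, and the intersection of two distinct orthants is either empty or a strictly smaller orthant (it fixes strictly more coordinates). Consequently, the only terms that can contribute to the coefficient of $d^{n-1}$ in $h_I(d)$ are the singletons $\{j\}$ for which $P_j$ is itself an $n$-orthant; each such term contributes $1/(n-1)!$, while every other term (lower-dimensional orthants in the decomposition, as well as all intersections of size $\geq 2$) contributes a polynomial of degree at most $n-2$. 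Matching the $d^{n-1}$ coefficients on both sides of $h_I(d) = p_\lambda(d)$ then yields $N/(n-1)! = k/(n-1)!$, i.e.\ $N = k$.

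The main subtlety I anticipate is confirming that $N$ really counts the $n$-orthants contained in the complement intrinsically, independently of which decomposition is chosen. This is resolved by observing that an $n$-orthant $P_a^i$ is contained in the complement of $I$ if and only if $x_i^a \notin I$, a condition depending only on $I$; any such $n$-orthant must therefore be absorbed into the natural decomposition coming from the previous proposition, and the leading-coefficient calculation above identifies this intrinsic count as $k$.
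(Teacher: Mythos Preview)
Your argument is correct and follows essentially the same strategy as the paper: both compute the leading coefficient of $p_\lambda$ as $k/(n-1)!$, observe that only the $n$-orthants in the complement contribute to this coefficient (since lower-dimensional orthants and all overlaps have strictly smaller degree), and match coefficients to conclude $N=k$. Your version is somewhat more careful in making the inclusion--exclusion explicit and in verifying that the count of $n$-orthants is intrinsic to $I$, but the core idea is identical.
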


\begin{proof}
Given $\lambda$, the corresponding Hilbert polynomial $\sum_{i=1}^m \binom{t+\lambda_i-i}{\lambda_i-1}$ has degree $n-1$.
Since there are no terms of degree greater than $n-1$, the complement of the ideal cannot contain the entire $n+1$ dimensional lattice so only the $n$-orthants contained in the complement of $I$ contribute to the leading coefficient.
Since the $n$-orthants overlap with other ($k$-)orthants in lower dimensional spaces, each $n$-orthant contributes $\frac{1}{(n-1)!}$ to the leading coefficient.
Since the leading coefficient of $p_\lambda$ is $\frac{k}{(n-1)!}$, the complement of $I$ contains exactly $k$ many $n$-orthants.
\end{proof}

\medskip\noindent
These lemmas forms the basis for proving the remaining cases of Theorem \ref{thm: main}.

\begin{prop}
\label{prop: 6}
(6) If $\lambda =(n^k,1^3)$, then $H_{n,\lambda} = \binom{k+n}{n}\frac{n(n+1)(5n+1)}{3}$.
\end{prop}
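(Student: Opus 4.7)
The plan is to combine Lemma \ref{lem: northants} with a case analysis on the remaining low-dimensional orthants in the complement. By Lemma \ref{lem: northants}, the complement of any saturated monomial ideal with Hilbert polynomial $p_\lambda$ contains exactly $k$ many $n$-orthants, each of the form $P_a^i$. Downward closure of the complement forces these to organize as $\bigcup_{i=0}^n \bigcup_{j=0}^{a_i-1} P_j^i$ for some weak composition $a_0 + \cdots + a_n = k$, giving $\binom{k+n}{n}$ such arrangements (indeed, if $P_a^i$ lies in the complement then so does $P_{a-1}^i$, since any $m \in P_{a-1}^i$ satisfies $m \mid m \cdot x_i \in P_a^i$). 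Moreover, this union is precisely the complement of the principal ideal $(x_0^{a_0}\cdots x_n^{a_n})$, whose Hilbert polynomial, by the hockey stick identity, equals $\binom{d+n}{n} - \binom{d+n-k}{n} = \sum_{i=1}^{k}\binom{d+n-i}{n-1}$, which is exactly the $(n^k)$-part of $p_\lambda$.

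Next, I will argue that the remaining contribution $+3$ to the Hilbert polynomial must come from exactly $3$ rays lying in the free region $F = \{y \in \mathbb{Z}_{\geq 0}^{n+1} : y_i \geq a_i \text{ for all } i\}$. By Remark \ref{rem: coeff}, any $j$-orthant with $2 \leq j \leq n-1$ contributes a polynomial of degree $j-1 \geq 1$ with positive leading coefficient, and no combination of such orthants (via inclusion-exclusion against the $n$-orthants or each other) can produce only a constant without disturbing the already-matched higher-degree coefficients. Since $F$ is a translated copy of $\mathbb{Z}_{\geq 0}^{n+1}$ and downward closure of the full complement reduces within $F$ to downward closure of the $3$-ray subconfiguration, I obtain $H_{n,\lambda} = \binom{k+n}{n} \cdot N$, where $N$ counts saturated monomial ideals in $\mathbb{C}[x_0,\ldots,x_n]$ with constant Hilbert polynomial $3$.

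I compute $N$ by enumerating $3$-ray arrangements in $\mathbb{Z}_{\geq 0}^{n+1}$ according to the multiset of ray directions. If all three rays share a direction $e_i$ (Case A), their perpendicular positions form an order ideal of size $3$ in $\mathbb{Z}_{\geq 0}^n$, which by direct enumeration is either linear $\{0,e_s,2e_s\}$ ($n$ ideals) or L-shaped $\{0,e_s,e_t\}$ with $s \neq t$ ($\binom{n}{2}$ ideals), contributing $(n+1)\cdot \tfrac{n(n+1)}{2}$ arrangements. If two rays lie in a direction $d_a$ and the third in $d_b \neq d_a$ (Case B), there are $(n+1)n$ ordered pairs $(d_a,d_b)$; the two $d_a$-rays' perpendicular positions form an order ideal $\{0,e_j\}$ in $n$ ways; and the third ray is forced to sit at the origin of its perpendicular space, contributing $n^2(n+1)$ arrangements. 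If all three directions are distinct (Case C), every perpendicular position is forced to the origin, contributing $\binom{n+1}{3}$ arrangements.

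The main obstacle is verifying the ``forced-to-origin'' claims in Cases (B) and (C). The argument is: if some ray $R$ in direction $d$ had a perpendicular coordinate $p_c > 0$, then taking a divisor of a point of $R$ by lowering the $c$-coordinate below $p_c$ while choosing the $d$-coordinate arbitrarily large yields a monomial lying in none of the three rays---the other two rays have bounded $d$-coordinate since their direction is not $d$, while $R$ itself requires $y_c = p_c$---contradicting downward closure. Once this is settled, summing the three cases yields
\[
N = \frac{n(n+1)^2}{2} + n^2(n+1) + \binom{n+1}{3} = \frac{n(n+1)}{6}\bigl(3(n+1)+6n+(n-1)\bigr) = \frac{n(n+1)(5n+1)}{3},
\]
completing the proof with $H_{n,\lambda} = \binom{k+n}{n}\tfrac{n(n+1)(5n+1)}{3}$.
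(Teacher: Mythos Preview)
Your proof is correct and follows essentially the same approach as the paper: both invoke Lemma~\ref{lem: northants} to fix $k$ many $n$-orthants (counted by $\binom{k+n}{n}$), argue that the residual constant $3$ forces exactly three rays, and split the ray count into the same three cases (all parallel, exactly two parallel, all distinct) summing to $\frac{n(n+1)(5n+1)}{3}$. Your write-up is slightly more explicit in justifying the independence of the ray count from the $n$-orthant choice (via the translated region $F$) and the downward-closure constraints forcing rays to the origin, but the underlying argument and case counts are identical to the paper's.
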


\begin{proof}
In this case, $p_\lambda(d) = \sum_{i=1}^k \binom{d+n-i}{n-1} + \sum_{i=k+1}^{k+3}\binom{d+1-i}{1-1} = \sum_{i=1}^k \binom{d+n-i}{n-1} + 3$.
By Lemma \ref{lem: northants}, there are $k$ many $n$-orthants contained in the complement of $i$.
If we think of choosing them in order by the amount which they are shifted away from the axes, consider the $i$-th chosen $n$-orthant, $K$. It is parallel to $j$ previously chosen $n$-orthants for some $0\leq j \leq i-1$.
Then there are $\binom{d+n-j}{n-1}$ monomials of degree $d$ in $K$. 
Similarly, it overlaps each of the $k$-th previously chosen nonparallel $n$-orthants (of the $i-1-j$) in $\binom{d+n-1-k}{n-2}$ many monomials of degree $d$.
Thus, the contribution of the $i$-th chosen $n$-orthant is $\binom{d+n-j}{n-1} - \sum_{k=1}^{i-j}\binom{d+n-k-j}{n-2} = \binom{d+n-i}{n-1}$.
Thus, the contribution of all of the $n$-orthants to the Hilbert polynomial is $ \sum_{i=1}^k \binom{d+n-i}{n-1}$.

\medskip\noindent
This leaves only $3$ to be contributed by other $k$-orthants to the Hilbert polynomial.
Since rays contribute $1$ each and any $k$-orthant for $k\geq 2$ would change higher order terms, this means that there are $3$ rays to be chosen.
Thus, a saturated monomial with this Hilbert polynomial is equivalent to the choice of $k$ many $n$-orthants and 3 rays.
Choosing an $n$-orthant to add is equivalent to choosing which variable to multiply by so there is $\binom{n+k}{k}$ ways to choose $k$ of them.
Given a choice of $n$-orthants, the rays can be chosen so that none are parallel, so that exactly 2 are parallel, or so that all 3 are parallel.

\medskip\noindent
There are $\binom{n+1}{3}$ ways to chose the rays in distinct directions.
There are $2n\binom{n+1}{2}$ ways to chose the rays in only two directions.
Finally, there are $(n+1)\left(n +\binom{n}{2}\right)$ ways to chose the three rays in the same direction.

\medskip\noindent
Putting these two counts together, there are \[\binom{k+n}{n}\Bigg[\binom{n+1}{3}+2n\binom{n+1}{2}+\big(n+1\big)\Big(n+\frac{n(n-1)}{2}\Big)\Bigg]\]
saturated monomial ideals in $n+1$ variables with Hilbert polynomials $p_\lambda$.
This count simplifies to 
\[H_{n,\lambda} = \binom{k+n}{n}\left(\frac{n(n+1)(5n+1)}{3}\right)\]

\end{proof}

\begin{prop}
\label{prop: 3}
(3) If $\lambda = (1)$ or $\lambda = (n^{r-2},\lambda_{r-1},1)$ where $r \geq 2$ and $m\geq \lambda_{r-1} \geq 1$, then $H_{n,\lambda} = n+1$ and $H_{n,\lambda} = \binom{n+r-2}{r-2}\binom{n+1}{\lambda_{r-1}}(n+1)$, respectively.
\end{prop}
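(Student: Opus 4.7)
The plan is to split into the two asserted sub-cases.

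For $\lambda=(1)$, the Hilbert polynomial reduces to the constant $1$. Since by Remark \ref{rem: coeff} a $k$-orthant contributes a polynomial of degree exactly $k-1$, the only configuration realizing the constant $1$ is a single ray. There are $n+1$ coordinate axes, one ray along each, so $H_{n,(1)}=n+1$.

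For $\lambda=(n^{r-2},\lambda_{r-1},1)$, by Lemma \ref{lem: northants} the complement of any such saturated monomial ideal contains exactly $r-2$ many $n$-orthants. Following the telescoping computation in the proof of Proposition \ref{prop: 6}, stacking these along the axes with heights $(k_0,\dots,k_n)$ satisfying $\sum k_i=r-2$ contributes exactly $\sum_{i=1}^{r-2}\binom{t+n-i}{n-1}$ to the Hilbert polynomial, and the number of such stackings is $\binom{n+r-2}{r-2}$ by a standard stars-and-bars count. The residual of $p_\lambda$ is then $\binom{t+\lambda_{r-1}-(r-1)}{\lambda_{r-1}-1}+1$; by Remark \ref{rem: coeff} this must be supplied by exactly one $\lambda_{r-1}$-orthant and one ray.

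The core of the argument is to show that the choices of the $\lambda_{r-1}$-orthant and the ray are each independent of the stacking and of each other. My plan is to place each of these two extra pieces at the ``corner'' of the shifted orthant $\{y:y_i\geq k_i\text{ for all }i\}$: for the $\lambda_{r-1}$-orthant, pick any $\lambda_{r-1}$ of the $n+1$ coordinates to be free and set each non-free coordinate to its corresponding $k_i$, giving $\binom{n+1}{\lambda_{r-1}}$ choices; for the ray, pick any of the $n+1$ axis directions after a similar shift, giving $n+1$ choices. Multiplying the three independent factors produces the claimed formula.

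The main obstacle is verifying this independence and establishing the bijection with saturated monomial ideals. Concretely, under the ``corner'' placement, the $\lambda_{r-1}$-orthant overlaps the preexisting $n$-orthants in a collection of lower-dimensional orthants whose contributions telescope via the hockey-stick identity to exactly $\binom{t+\lambda_{r-1}-(r-1)}{\lambda_{r-1}-1}$, and a parallel calculation for the ray leaves contribution $1$ after subtracting its overlap with the $\lambda_{r-1}$-orthant. The pathology of Example \ref{ex: 2 quadrants} (two orthants meeting only at the origin) is precluded here, because the corner placement forces each newly added orthant to meet the existing stack along the expected shifted faces rather than along smaller-dimensional strata. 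Finally, to close the bijection I would verify that any saturated monomial ideal with Hilbert polynomial $p_\lambda$ arises from a unique such triple: the stack heights $(k_i)$ are recovered from the $n$-orthants guaranteed by Lemma \ref{lem: northants}, and the direction of the remaining $\lambda_{r-1}$-orthant and ray are then determined uniquely by the residual monomials in the complement.
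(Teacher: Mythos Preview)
Your proposal is correct and follows essentially the same approach as the paper: both arguments invoke Lemma~\ref{lem: northants} to pin down the $r-2$ many $n$-orthants, read off from the residual that exactly one $\lambda_{r-1}$-orthant and one ray remain, and then multiply the three independent counts $\binom{n+r-2}{r-2}$, $\binom{n+1}{\lambda_{r-1}}$, and $n+1$.

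Two small points of divergence are worth noting. First, for $\lambda=(1)$ the paper simply observes that the Hilbert scheme is $\PP^n$ and reads off $n+1$, whereas you argue directly via ray-counting; both are fine. Second, and more substantively, your ``corner placement'' for the ray does not work uniformly: if the ray points along one of the $\lambda_{r-1}$ free directions of the $\lambda_{r-1}$-orthant and is placed at the corner $(k_0,\dots,k_n)$, it lies \emph{inside} that orthant and contributes $0$, not $1$. The paper handles this by splitting the ray count as $(n+1-\lambda_{r-1})+\lambda_{r-1}=n+1$, distinguishing rays not parallel to the $\lambda_{r-1}$-orthant (placed at the corner) from rays parallel to it (which must be shifted one step further in one of the fixed coordinates). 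Your final paragraph gestures at the overlap subtraction but does not actually specify the correct placement in the parallel case; tightening this would complete the argument.
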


\begin{proof}
\noindent
In the case $\lambda = (1)$, the Hilbert polynomial is $p_\lambda = 1$. Then the Hilbert scheme is $\mathbb{P}^n$ so the result is immediate.

\medskip\noindent
In the second case, $p_\lambda(d) = \sum_{i=1}^{r-2} \binom{d+n-i}{n-1} + \binom{d+\lambda_{r-1}-(r-1)}{\lambda_{r-1}-1} +1$.
Analogous to the proof of the Proposition \ref{prop: 6}, the complete of the ideal must contain exactly $r-2$ many $n$ orthants and their contribution to the Hilbert polynomial is $ \sum_{i=1}^{r-2} \binom{d+n-i}{n-1}$.

\medskip\noindent
Since this leaves $\binom{d+\lambda_{r-1}-(r-1)}{\lambda_{r-1}-1} +1$ left to be contributed to the Hilbert polynomial which has degree $\lambda_{r-1}-1$, the complement of the ideal must contain at least 1 $\lambda_{r-1}$-orthant.
By similar argument to choosing the $k$-th $n$-orthant in the previous proof, the first $\lambda_{r-1}$-orthant chosen contributes $\binom{d+\lambda_{r-1}-(r-1)}{\lambda_{r-1}-1}$ to the Hilbert polynomial.
After having chosen this, there is only a remaining $1$ to be contributed to the Hilbert polynomial, which again must be contributed by a ray.
Thus, a saturated monomial ideal with Hilbert polynomial $p_\lambda$ is equivalent to picking $r-2$ many $n$-orthants, a $\lambda_{r-1}$-orthant, and a ray.

\medskip\noindent
As in the previous proof, there is $\binom{n+r-2}{r-2}$ ways to pick those $n$ orthants.
There is then $\binom{n+1}{\lambda_{r-1}}$ ways to choose that $\lambda_{r-1}$-orthant given the chosen $n$-orthants.
There are then $n+1-\lambda_{r-1}$ ways to choose the ray not parallel to the $\lambda_{r-1}$ orthant and $\lambda_{r-1}$ ways to choose the ray parallel to it given the previous choices.
Putting these together gives \[H_{n,\lambda} = \binom{n+r-2}{r-2}\binom{n+1}{\lambda_{r-1}}(n+1).\]
\end{proof}

\begin{prop}\label{prop: 5}
(5) If $\lambda = (n^{r-s-5},2^{s+4},1)$ where $r-5 \geq s \geq 0$, then \[H_{n,\lambda} = \binom{n+r-s-5}{r-s-5}\left(\binom{n+1}{3}\left(\binom{3+s+4}{s+4}-3\right)+\binom{n+1}{2}\right)(n+1).\]
\end{prop}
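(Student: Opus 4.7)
The plan is to adapt the strategies of Propositions \ref{prop: 6} and \ref{prop: 3}, decomposing the complement of a saturated monomial ideal with Hilbert polynomial $p_\lambda$ into its canonical $n$-orthants, $2$-orthants, and ray, then counting each layer in turn.

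First, Lemma \ref{lem: northants} forces exactly $r-s-5$ many $n$-orthants in the complement. The argument of Proposition \ref{prop: 6} then counts these in $\binom{n+r-s-5}{r-s-5}$ ways, contributing $\sum_{i=1}^{r-s-5}\binom{d+n-i}{n-1}$ to $p_\lambda$. The residual contribution is linear with leading coefficient $s+4$ and constant residual $1$, so by the same pattern applied to the linear term the complement must contain exactly $s+4$ many $2$-orthants together with a single ray.

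Next comes the key structural constraint: the direction pairs of the $s+4$ many $2$-orthants must collectively be supported on at most three coordinate axes. The justification is a two-part combinatorial argument in the spirit of Example \ref{ex: 2 quadrants}. If four or more axes were used, then either one finds two $2$-orthants whose direction pairs are disjoint (the direct Example \ref{ex: 2 quadrants} obstruction, which overshoots the Hilbert polynomial), or else the direction pairs form a ``star'' through a single common vertex on four or more leaves; a direct inclusion-exclusion calculation shows such a star contributes $md+1$ instead of the required $md+\frac{m(3-m)}{2}$ when $m=s+4\geq 3$. This splits the count into two cases. In the ``full $3$-face'' case one selects the ambient $3$-face in $\binom{n+1}{3}$ ways and arranges the $s+4$ $2$-orthants inside using all three directions, yielding $\binom{s+7}{3}-3$ configurations---the $-3$ removes arrangements that collapse to a single direction, which are then absorbed into the parallel case. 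In the ``parallel'' case one selects the common direction in $\binom{n+1}{2}$ ways and the stack is forced. Independently, as in Proposition \ref{prop: 3}, the ray admits $n+1$ directional choices.

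The main technical obstacle is the count $\binom{s+7}{3}$ of arrangements inside a $3$-face. Unlike the weak-composition count $\binom{s+6}{2}$ governing quadrant distributions in three variables in Proposition \ref{prop: 1}, here the $2$-orthants live in a strictly larger ambient lattice, and their positions must be compatible both with the $n$-orthants above and with the adjacent ray. I expect the count to arise from a multiset-type interpretation with four labels---parallel to, and a special case of, the analogous $\binom{\lambda_{r-s-2}+1+s+2}{s+2}$ term appearing in Proposition \ref{prop: 4}---and verifying this cleanly, together with the bijection between the labeled configurations and saturated monomial ideals, is the technical core of the argument.
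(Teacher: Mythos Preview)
Your approach mirrors the paper's exactly: count $n$-orthants via Lemma \ref{lem: northants}, argue the $s+4$ quadrants must lie in a common $3$-orthant, then multiply (choice of $3$-orthant) $\times$ (quadrant arrangements, split into not-all-parallel and all-parallel) $\times$ (ray choices) to reach the same formula.

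The one genuine gap is in your graph-theoretic dichotomy for the $3$-orthant constraint. If exactly four axes are used and no two direction pairs are disjoint, the pairs form a star $K_{1,3}$ with \emph{three} leaves, not four; since $s+4\ge 4$, some leaf direction is then repeated, and your inclusion--exclusion claim that ``such a star contributes $md+1$'' tacitly assumes each direction pair occurs exactly once, so it does not cover this situation. The paper sidesteps this with a more pedestrian split: either no three of the quadrants lie in a common $3$-orthant, or at least three do but some fourth does not; in each sub-case it computes the contribution of the first four chosen quadrants directly, which handles multiplicities automatically. As for the count $\binom{s+7}{3}$ that you flag as the technical core: the paper simply asserts $\binom{3+s+4}{s+4}$ for arrangements inside a chosen $3$-orthant without further argument, so its proof does not resolve your uncertainty any more explicitly than your own sketch does.
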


\begin{proof}
By Lemma \ref{lem: northants}, the complement of a saturated monomial ideal with Hilbert polynomial $p_\lambda$ contains $r-s-5$ many $n$-orthants.
Arguing analogously to the previous cases using the coefficients of the Hilbert polynomials, we see the complement must also contain exactly $s+4$ quadrants.
However, a generalization of Example \ref{ex: 2 quadrants} to 8 variables (choosing 4 quadrants such that they all only intersect at the origin) shows that not all choices of $s+4$ quadrants live in the complement some ideal with the correct Hilbert polynomial.
We now enumerate those that do.

\medskip\noindent
If the $s+4$ quadrants are not all contained in any 3-orthant, then we claim that they do not give the correct Hilbert polynomial.
In particular, if no 3 are in the same 3-orthant, then the first four quadrants we choose contribute at least %\[\binom{d+2-(r-s-5+1)}{2-1}+\binom{d+2-(r-s-5+2)}{2-1}+\binom{d+2-(r-s-5+3)}{2-1}+1+\binom{d+2-(r-s-5+4)}{2-1}+1 \text{ or }\]
% xy = {x^ay^b}, xz = {x^az^b}\{x^a}, yw = {y^a w^b}\{y^a}, x*yw = {xy^a w^b}\{xy^a}
\[\binom{d+2-(r-s-5+1)}{2-1}+\binom{d+2-(r-s-5+2)}{2-1}+\binom{d+2-(r-s-5+3)}{2-1}+1+\binom{d+2-(r-s-5+4)}{2-1}+1.\]
% xy = {x^ay^b}, xy*z = {x^ay^b z}, yw = {y^a w^b}\{y^a}, z*yw = {z y^a w^b}\{zy^a}
If at least 3 are contained in one 3-orthant, then we choose these three first and a quadrant not contained in that 3-orthant 4th.
These four quadrants contribute \[\binom{d+2-(r-s-5+1)}{2-1}+\binom{d+2-(r-s-5+2)}{2-1}+\binom{d+2-(r-s-5+3)}{2-1}+\binom{d+2-(r-s-5+4)}{2-1}+2.\]
In both cases, all of the subsequently chosen quadrants contribute at least the expected amount.
In either case, this would force a negative number of rays in order to have the correct Hilbert polynomial, which is obviously impossible.

\medskip\noindent
On the other hand, if all of the $s+4$ quadrants are contained in a 3-orthant, then the $k$-th chosen one contributes $\binom{d+2-(r-s-5+k)}{2-1}$ to the Hilbert polynomial as expected.
%{\color{red} Expand this more?}

\medskip\noindent
Finally, given a choice of $r-s-5$ many $n$-orthants and $s+4$ quadrants in some $3$-orthant, there is a remaining one in the Hilbert polynomial which must be contributed by a single additional ray.
Thus, a saturated monomial ideal with Hilbert polynomial $p_\lambda$ is equivalent to the choice of $r-s-5$ many $n$-orthants, $s+4$ quadrants in some $3$-orthant, and a single ray.

\medskip\noindent
As in previous proofs,there are $\binom{n+r-s-5}{r-s-5}$ many ways to chose the $n$-orthants.
There at $\binom{n+1}{3}$ ways to chose the 3-orthant containing the quadrants.
Given a chosen 3-orthant, there is $\binom{3+s+4}{s+4}-3$ ways to chose the quadrants such that they are not all parallel.
We count the case where all of the quadrants are parallel separately as each such choice lies in multiple 3-orthants; there is $\binom{n+1}{2}$ ways to choose them all parallel.
Finally, in any of the above choices, there are $n+1$ ways to choose the remaining ray.
Putting this together gives \[H_{n,\lambda} = \binom{n+r-s-5}{r-s-5}\left(\binom{n+1}{3}\left(\binom{3+s+4}{s+4}-3\right)+\binom{n+1}{2}\right)(n+1) \]
\end{proof}

\begin{prop}
\label{prop: 4}
(4) If $\lambda = (n^{r-s-3},\lambda_{r-s-2}^{s+2},1)$ where $r-3 \geq s \geq 0$ and $n-1 \geq \lambda_{r-s-2} \geq 3$, then \[H_{n,\lambda} = \binom{n+r-s-3}{r-s-3}\left(\binom{n+1}{\lambda_{r-s-2}+1}\left(\binom{\lambda_{r-s-2}+1+s+2}{s+2}-(\lambda_{r-s-2}+1)\right)+\binom{n+1}{\lambda_{r-s-2}}\right)(n+1) .\]
\end{prop}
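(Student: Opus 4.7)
The plan is to mirror the proof of Proposition \ref{prop: 5} very closely, with $\lambda_{r-s-2}$-orthants playing the role that quadrants (2-orthants) played there and $(\lambda_{r-s-2}+1)$-orthants playing the role of 3-orthants. Writing $\mu := \lambda_{r-s-2}$ for brevity, the relevant Hilbert polynomial is
\[p_\lambda(d) = \sum_{i=1}^{r-s-3}\binom{d+n-i}{n-1} \;+\; \sum_{i=r-s-2}^{r-1}\binom{d+\mu-i}{\mu-1} \;+\; 1.\]

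First I would invoke Lemma \ref{lem: northants} to conclude that the complement of any saturated monomial ideal with this Hilbert polynomial contains exactly $r-s-3$ many $n$-orthants, and argue as in Proposition \ref{prop: 6} that ordering them by how far they are shifted from the axes makes their joint contribution telescope to exactly $\sum_{i=1}^{r-s-3}\binom{d+n-i}{n-1}$. This leaves $\sum_{i=r-s-2}^{r-1}\binom{d+\mu-i}{\mu-1} + 1$ to be contributed by lower-dimensional orthants. The leading coefficient of this remainder is $(s+2)/(\mu-1)!$, and applying the same leading-coefficient argument as in Lemma \ref{lem: northants} (now within the region where the $n$-orthants have already been placed) forces exactly $s+2$ many $\mu$-orthants, with no orthants of intermediate dimension $\mu+1,\dots,n-1$ appearing.

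The main obstacle is the structural step generalizing Example \ref{ex: 2 quadrants} / the case analysis of Proposition \ref{prop: 5}: I need to show that the $s+2$ many $\mu$-orthants must all lie inside a single common $(\mu+1)$-orthant. The plan is a contradiction argument: if they do not, list them so that the first $\mu+1$ lie in some fixed $(\mu+1)$-orthant (or fewer, if no $\mu+1$ of them do) and so that a "stray" $\mu$-orthant $K$ appears next. Because $K$ fails to share $\mu-1$ free directions with each preceding $\mu$-orthant inside the $(\mu+1)$-orthant, its pairwise intersections with them have dimension strictly less than $\mu-1$, so the inclusion-exclusion contribution of $K$ to the degree-$(\mu-1)$ term drops and its contribution to some lower-degree coefficient increases by a strictly positive amount. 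This excess, compared with the expected contribution $\binom{d+\mu-k}{\mu-1}$, would force a strictly negative number of rays to balance the constant term, which is impossible. Hence all $s+2$ many $\mu$-orthants lie in a common $(\mu+1)$-orthant, and the remaining $1$ in the Hilbert polynomial is contributed by a single ray.

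With the structural constraint in hand, the count is purely combinatorial and parallels Proposition \ref{prop: 5}. There are $\binom{n+r-s-3}{r-s-3}$ ways to pick the $n$-orthants; $\binom{n+1}{\mu+1}$ ways to pick the $(\mu+1)$-orthant containing the $\mu$-orthants; $\binom{\mu+1+s+2}{s+2}-(\mu+1)$ ways to arrange $s+2$ many $\mu$-orthants inside this chosen $(\mu+1)$-orthant so that they are not all parallel; and a separate count of $\binom{n+1}{\mu}$ for the all-parallel configurations, which must be handled on their own to avoid overcounting since a stack of parallel $\mu$-orthants lies inside multiple $(\mu+1)$-orthants. Finally there are $n+1$ independent choices for the ray. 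Multiplying these gives
\[H_{n,\lambda} \;=\; \binom{n+r-s-3}{r-s-3}\left(\binom{n+1}{\mu+1}\left(\binom{\mu+1+s+2}{s+2}-(\mu+1)\right)+\binom{n+1}{\mu}\right)(n+1),\]
which is the claimed formula. I expect the hardest step to be the inclusion-exclusion bookkeeping that rules out "spread out" configurations of $\mu$-orthants; everything else is a direct generalization of the arguments already appearing in Propositions \ref{prop: 6} and \ref{prop: 5}.
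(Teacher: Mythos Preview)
Your proposal is correct and follows essentially the same approach as the paper: both use Lemma \ref{lem: northants} and the coefficient argument to pin down the $n$-orthants and $\mu$-orthants, both argue by contradiction that the $\mu$-orthants must lie in a common $(\mu+1)$-orthant (the paper picks two $\mu$-orthants not in a common $(\mu+1)$-orthant and exhibits the excess directly, rather than your ``stray orthant'' ordering, but the idea is identical), and both finish with the same combinatorial count split into the not-all-parallel and all-parallel cases. One small wording slip: the stray orthant's contribution to the degree-$(\mu-1)$ coefficient does not drop---that coefficient is fixed by the number of $\mu$-orthants---rather its overlap with the earlier $\mu$-orthants is too small, so its net contribution exceeds the expected $\binom{d+\mu-k}{\mu-1}$ at degree $\mu-2$, forcing (as the paper phrases it) a negative number of $(\mu-1)$-orthants.
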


\begin{proof}
By Lemma \ref{lem: northants}, the complement of a saturated monomial ideal with Hilbert polynomial $p_\lambda$ contains $r-s-3$ many $n$-orthants.
Arguing analogously to the previous cases using the coefficients of the Hilbert polynomials, we see the complement must also contain exactly $s+2$ many $\lambda_{r-s-2}$-orthants.
As before, not all choices of $s+2$  many $\lambda_{r-s-2}$-orthants live in the complement some ideal with the correct Hilbert polynomial.
We now enumerate those that do.

\medskip\noindent
If the $\lambda_{r-s-2}$-orthants are not all contained in any $(\lambda_{r-s-2}+1)$-orthant, then we claim that they do not give the correct Hilbert polynomial.
In particular, choose the first two of them to not lie in the same $(\lambda_{r-s-2}+1)$-orthant, then they contribute at least 
\[\binom{d+\lambda_{r-s-2}-(r-s-3+1)}{\lambda_{r-s-2}-1}+
\binom{d+\lambda_{r-s-2}-(r-s-3+2)}{\lambda_{r-s-2}-1}\]\[+
\left(\binom{d+\lambda_{r-s-2}-(r-s-3+2)}{\lambda_{r-s-2}-2}-
\binom{d+\lambda_{r-s-2}-(r-s-3+2)}{\lambda_{r-s-2}-3}\right)\]
and all of the subsequently chosen ones contribute at least the expected amount.
In either case, this would force a negative number of $(\lambda_{r-s-2}-1)$-orthants in order to have the correct Hilbert polynomial, which is obviously impossible.

\medskip\noindent
On the other hand, if all of them are chosen within a $(\lambda_{r-s-2}+1)$-orthant, then the $k$-th chosen one contributes $\binom{d+\lambda_{r-s-2}-(r-s-2+k)}{\lambda_{r-s-2}-1}$ to the Hilbert polynomial as expected.
%{\color{red} Expand this more?}

\medskip\noindent
Finally, given a choice of $r-s-3$ many $n$-orthants and $s+2$ many $\lambda_{r-s-2}$-orthants in some $(\lambda_{r-s-2}+1)$-orthant, there is a remaining one in the Hilbert polynomial which must be contributed by a single additional ray.
Thus, a saturated monomial ideal with Hilbert polynomial $p_\lambda$ is equivalent to the choice of $r-s-3$ many $n$-orthants, $s+2$ many $\lambda_{r-s-2}$-orthants in some $(\lambda_{r-s-2}+1)$-orthant, and a single ray.

\medskip\noindent
As in previous proofs,there are $\binom{n+r-s-3}{r-s-3}$ many ways to chose the $n$-orthants.
There at $\binom{n+1}{\lambda_{r-s-2}+1}$ ways to chose the $(\lambda_{r-s-2}+1)$-orthant containing the $\lambda_{r-s-2}$-orthants.
Given a chosen $(\lambda_{r-s-2}+1)$-orthant, there are $\binom{\lambda_{r-s-2}+1+s+2}{s+2}-(\lambda_{r-s-2}+1)$ ways to chose the $\lambda_{r-s-2}$-orthants such that they are not all parallel.
We count the case where all of the $\lambda_{r-s-2}$-orthants are parallel separately as each such choice lies in multiple $(\lambda_{r-s-2}+1)$-orthants; there is $\binom{n+1}{\lambda_{r-s-2}}$ ways to choose them all parallel.
Finally, in any of the above choices, there are $n+1$ ways to choose the remaining ray.
Putting this together gives \[H_{n,\lambda} = \binom{n+r-s-3}{r-s-3}\left(\binom{n+1}{\lambda_{r-s-2}+1}\left(\binom{\lambda_{r-s-2}+1+s+2}{s+2}-(\lambda_{r-s-2}+1)\right)+\binom{n+1}{\lambda_{r-s-2}}\right)(n+1).\]
\end{proof}
% xy, yz, zw, wx
% 4d   = d+1 +d +d +d-1
%4d -2 =  d+1+d+d-1+d-2
%
% xy, yz, zx, wx
% 4d = d+1 +d +d-1 +d
%
%xyz, xyw, xzw, yzw
%\binom{d+3-1}{3-1}+\binom{d+3-2}{3-1}+\binom{d+3-3}{3-1}
%

%\newpage
\bibliographystyle{alphanum}
\bibliography{fullbibliography}

\begin{thebibliography}{LQW}

\bibitem[ByB]{BB}
A.~Bia\l~ynicki Birula.
\newblock Some theorems on actions of algebraic groups.
\newblock {\em Ann. of Math. (2)}, 98:480--497, 1973.

\bibitem[ESm]{ES}
Geir Ellingsrud and Stein~Arild Str\o~mme.
\newblock On the homology of the {H}ilbert scheme of points in the plane.
\newblock {\em Invent. Math.}, 87(2):343--352, 1987.

\bibitem[Eva]{E}
L.~Evain.
\newblock The {C}how ring of punctual {H}ilbert schemes on toric surfaces.
\newblock {\em Transform. Groups}, 12(2):227--249, 2007.

\bibitem[Ful]{F}
William Fulton.
\newblock {\em Intersection theory}, volume~2 of {\em Ergebnisse der Mathematik
  und ihrer Grenzgebiete. 3. Folge. A Series of Modern Surveys in Mathematics
  [Results in Mathematics and Related Areas. 3rd Series. A Series of Modern
  Surveys in Mathematics]}.
\newblock Springer-Verlag, Berlin, second edition, 1998.

\bibitem[G\"1]{G}
L.~G\"{o}ttsche.
\newblock Hilbert schemes of points on surfaces.
\newblock In {\em Proceedings of the {I}nternational {C}ongress of
  {M}athematicians, {V}ol. {II} ({B}eijing, 2002)}, pages 483--494. Higher Ed.
  Press, Beijing, 2002.

\bibitem[G\"2]{G2}
Lothar G\"{o}ttsche.
\newblock The {B}etti numbers of the {H}ilbert scheme of points on a smooth
  projective surface.
\newblock {\em Math. Ann.}, 286(1-3):193--207, 1990.

\bibitem[Gro1]{Gr}
I.~Grojnowski.
\newblock Instantons and affine algebras. {I}. {T}he {H}ilbert scheme and
  vertex operators.
\newblock {\em Math. Res. Lett.}, 3(2):275--291, 1996.

\bibitem[Gro2]{Gro}
Alexander Grothendieck.
\newblock Techniques de construction et th\'{e}or\`emes d'existence en
  g\'{e}om\'{e}trie alg\'{e}brique. {IV}. {L}es sch\'{e}mas de {H}ilbert.
\newblock In {\em S\'{e}minaire {B}ourbaki, {V}ol. 6}, pages Exp. No. 221,
  249--276. Soc. Math. France, Paris, 1995.

\bibitem[Hil]{H}
David Hilbert.
\newblock {\em Gesammelte {A}bhandlungen. {B}and {II}: {A}lgebra,
  {I}nvariantentheorie, {G}eometrie}.
\newblock Zweite Auflage. Springer-Verlag, Berlin-New York, 1970.

\bibitem[LQW]{LQW}
Wei-ping Li, Zhenbo Qin, and Weiqiang Wang.
\newblock Vertex algebras and the cohomology ring structure of {H}ilbert
  schemes of points on surfaces.
\newblock {\em Math. Ann.}, 324(1):105--133, 2002.

\bibitem[LS]{LS}
Manfred Lehn and Christoph Sorger.
\newblock Symmetric groups and the cup product on the cohomology of {H}ilbert
  schemes.
\newblock {\em Duke Math. J.}, 110(2):345--357, 2001.

\bibitem[Mac]{M}
F.~S. MacAulay.
\newblock Some {P}roperties of {E}numeration in the {T}heory of {M}odular
  {S}ystems.
\newblock {\em Proc. London Math. Soc. (2)}, 26:531--555, 1927.

\bibitem[Nak1]{N}
Hiraku Nakajima.
\newblock {\em Lectures on {H}ilbert schemes of points on surfaces}, volume~18
  of {\em University Lecture Series}.
\newblock American Mathematical Society, Providence, RI, 1999.

\bibitem[Nak2]{N2}
Hiraku Nakajima.
\newblock More lectures on {H}ilbert schemes of points on surfaces.
\newblock In {\em Development of moduli theory---{K}yoto 2013}, volume~69 of
  {\em Adv. Stud. Pure Math.}, pages 173--205. Math. Soc. Japan, [Tokyo], 2016.

\bibitem[SS]{SS}
Roy Skjelnes and Gregory~G. Smith.
\newblock Smooth {H}ilbert schemes: their classification and geometry.
\newblock {\em preprint}, 2020.
\newblock https://arxiv.org/abs/2008.08938.

\end{thebibliography}
\end{document}